\documentclass[a4paper,11pt]{amsart}
\usepackage{graphicx}
\usepackage{amsmath}
\usepackage{amssymb}
\usepackage{oldgerm}
\usepackage[all]{xy}

\linespread{1.3}

\newcommand{\End}{\operatorname{End}\nolimits}

\newcommand{\Hom}{\operatorname{Hom}\nolimits}

\newcommand{\id}{\operatorname{id}\nolimits}

\newcommand{\soc}{\operatorname{soc}\nolimits}
\newcommand{\rad}{\operatorname{rad}\nolimits}
\newcommand{\cone}{\operatorname{cone}\nolimits}
\newcommand{\N}{\operatorname{\mathbb{N}}\nolimits}
\newcommand{\Z}{\operatorname{\mathbb{Z}}\nolimits}
\newtheorem{theo}{Theorem}[section]
\newtheorem{cor}[theo]{Corollary}
\newtheorem{lemma}[theo]{Lemma}

\newtheorem{defi}[theo]{Definition}
\newtheorem{rem}[theo]{Remark}

\begin{document}
\title{Finite and bounded Auslander-Reiten components in the derived Category}
\author{Sarah Scherotzke}
\address{Sarah Scherotzke \newline Mathematical Institute \\ University of Oxford \\ 24-29 St.\
Giles \\ Oxford OX1 3LB \\ United Kingdom}
\email{scherotz@maths.ox.ac.uk}
\date{\today}

 \maketitle

\begin{abstract}
We analyze Auslander-Reiten
components for the bounded derived category of a
finite-dimensional algebra. We classify derived categories
whose Auslander-Reiten quiver has either a finite stable component or a stable component
with finite Dynkin tree class or a bounded stable component. Their
Auslander-Reiten quivers are determined.  We also determine components
that contain shift periodic complexes.
\end{abstract}

\section{Introduction}
In this paper we analyze the Auslander-Reiten triangles in the
bounded derived category of a finite-dimensional indecomposable algebra $A$,
denoted by $D^b(A)$. The bounded derived category of a
finite-dimensional algebra is a triangulated category and
Auslander-Reiten triangles are triangles with analogous properties
to Auslander-Reiten sequences for finite-dimensional algebras.
The conditions for the existence of such triangles in $D^b(A)$ have
been determined by Happel in \cite{H2}.

Analogously to the classical Auslander-Reiten theory, which
applies to Artin algebras, we can define Auslander-Reiten
components of the bounded derived category. These are locally
finite graphs, where the vertices correspond to indecomposable
complexes in $D^b(A)$. We want to know how and if certain results
on finite and dimension-bounded Auslander-Reiten components of finite-dimensional algebras
extend to the bounded derived category.

\smallskip

In the second section we give a brief introduction
to derived categories and we introduce Auslander-Reiten triangles as
defined by Happel in \cite{H1}. In the third section we deduce some properties of Auslander-Reiten triangles
that will be used in the other sections.

In the fourth section we classify the bounded derived categories
that have either a stable finite Auslander-Reiten component, a
stable Auslander-Reiten component with finite Dynkin tree class or a
stable bounded Auslander-Reiten component. In all these cases the
Auslander-Reiten quiver is described completely. Also components with shift periodic modules are determined.

\smallskip

In the classical Auslander-Reiten theory finite components occur
if and only if the algebra has finite representation type. We show that finite stable
Auslander-Reiten components occur for $D^b(A)$ if and only if $A$
is simple. In this case each component of the
Auslander-Reiten quiver is isomorphic to $A_1$.

\smallskip

In the classical Auslander-Reiten theory we call an Auslander-Reiten component bounded if the dimension of the modules appearing in this component is bounded. Motivated by this definition, we introduce bounded Auslander-Reiten components for the bounded derived category and show that the following are equivalent

1) The bounded derived category of $A$ has finite representation type, i.e. has only finitely many isomorphism classes of indecomposable objects up to shift;

2) There is a stable component with finite Dynkin tree class;

3) There is a bounded stable component.

In this case, the Auslander-Reiten quiver consists either only of one component $ \Z [T]$ with $T \not = A_1$ a finite Dynkin diagram, or of infinitely many components $ A_1$.

In the first case $A$ is derived equivalent to $kT$,
which is a hereditary algebra of finite representation type. In the second case $A$ is simple.
Finally we introduce shift periodic complexes in analogy to periodic modules. Possible tree classes for derived categories with shift periodic complexes are deduced. We show that their Auslander-Reiten quiver is either $\Z[T]$ with $T$ a finite Dynkin diagram or the component containing the shift periodic complex has tree class $A_{\infty}$.

\bigskip

I would like to thank my supervisor Dr Karin Erdmann for reading this paper. I am also grateful to Prof. Dieter Happel and Prof. Bernhard Keller for useful discussions and comments. Special thanks are due to the referee who read this paper very carefully and made many useful suggestions. 

\section{Preliminaries and Notation}
Let $A$ denote a finite-dimensional indecomposable algebra
over a field $k$ and $A$-mod the category of finite-dimensional
left $A$-modules. We denote by $\mathcal{P}$ the full subcategory
in $A$-mod of projective modules and $\mathcal{I}$ the full
subcategory of injective $A$-modules. Let $C \in \{A \mbox{-mod},\
\mathcal{P},\ \mathcal{I}\}$.

Then $Comp^{*,?}(C)$ are the complexes that are bounded above if
$*=-$, bounded below if $*=+$ and bounded if $*=b$. The homology
is bounded if $?=b$. We denote by $D^b(A)$ the bounded derived
category and by $K^{*,?}(C)$ the homotopy category.

Contractible complexes and homotopic to zero maps are related as stated in the following well-known result.
\begin{lemma}\label{contractible}Let $f:X\to Y$ be a map of complexes.

(1) The map $f$ is homotopic to zero if and only if $f$ factors
through a contractible complex.

(2) The projection $p: \cone(f) \to X[1]$ is a retraction if and
only if $f$ is homotopic to zero.
\end{lemma}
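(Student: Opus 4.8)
For part (1), the plan is to prove both implications separately. For the "only if" direction, suppose $f$ is homotopic to zero via a homotopy $s$, so that $f = d_Y s + s d_X$. I would then construct an explicit factorization of $f$ through the cone of the identity map on $X$ (or equivalently through the mapping cylinder), which is the standard contractible complex associated to $X$. Concretely, the complex $C(\id_X)$ with terms $X^{n+1}\oplus X^n$ is contractible, and one checks that $f$ factors as $X \to C(\id_X) \to Y$ where the first map uses $(d_X, \id)$-type components and the second uses $(s, f)$-type components; the homotopy identity is exactly what makes this a chain map. For the "if" direction, it suffices to observe that the identity map on any contractible complex is homotopic to zero (by definition of contractibility there is a contracting homotopy), and that the class of null-homotopic maps is an ideal in the category of complexes — closed under pre- and post-composition — so any map factoring through a contractible complex is null-homotopic.

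For part (2), I would work in the homotopy category $K(C)$ and use the standard triangle $X \xrightarrow{f} Y \to \cone(f) \xrightarrow{p} X[1]$. The projection $p$ being a retraction means there is $g: X[1] \to \cone(f)$ with $p g = \id_{X[1]}$. The key idea is that in a triangulated category, $p$ is a split epimorphism if and only if the connecting morphism $X[1] \to Y[1]$ (the rotation of $f$) is zero, equivalently $f[1] = 0$ in $K(C)$, equivalently $f = 0$ in $K(C)$, which by definition means $f$ is homotopic to zero. Alternatively, and more elementarily, I would argue directly with complexes: a splitting of $p$ together with the inclusion $Y \to \cone(f)$ lets one write $\cone(f) \cong Y \oplus X[1]$ compatibly with the triangle maps, and unwinding the differential of $\cone(f)$ — which encodes $f$ in its off-diagonal term — shows this direct-sum decomposition exists at the level of complexes precisely when that off-diagonal term can be removed by a change of basis, i.e. precisely when $f$ is null-homotopic. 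I would invoke part (1) here to phrase the conclusion in terms of factoring through a contractible complex if desired, since $\cone(f) \cong Y \oplus X[1]$ is itself then contractible modulo $X \oplus Y$.

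The main obstacle, such as it is, is bookkeeping: getting the signs and the indices in the differential of $\cone(f)$ and in the homotopy formula to line up so that the claimed maps are genuinely chain maps and the claimed identities genuinely hold. Since the result is "well-known," I would keep the exposition brief, give the explicit factorization in part (1) and the splitting-implies-decomposition argument in part (2), and leave the routine verification of the chain-map conditions to the reader.
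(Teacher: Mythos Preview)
Your proposal is correct. The paper states this lemma as a ``well-known result'' and gives no proof at all, so there is nothing to compare your approach against; your arguments for both parts are the standard ones and are sound. One minor remark: the closing sentence of your plan for part (2) --- that $\cone(f)\cong Y\oplus X[1]$ is ``contractible modulo $X\oplus Y$'' --- is muddled and not needed; the triangulated argument (rotate the triangle so that $p$ is the middle map and observe that $p$ is a split epimorphism iff $-f[1]=0$ in the homotopy category) already finishes the job cleanly, and you should simply stop there.
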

The homotopy category and the derived categories are triangulated
categories by \cite[10.2.4, 10.4.3]{We} where the shift functor
$[1]$ is the automorphism. The distinguished triangles are given
up to isomorphism of triangles by
\[ \xymatrix{X\ar[rr]^f&&Y\ar[rr]^{0\oplus {\rm
id}_Y}&&\cone(f)\ar[rr]^{{\rm id}_{X[1]}\oplus 0}&&X[1]}\] for
any morphism $f$.

\medskip

It is difficult to calculate the morphisms in the derived
category of $A$-modules. The following theorem provides an easier way to represent them.

\begin{theo}\cite[10.4.8]{We}
We
have the following equivalences of triangulated categories
\[ \begin{array}{c}
   K^{-,b}(\mathcal{P})\cong K^{+,b}(\mathcal{I})\cong D^b(A).
\end{array}
\]
\end{theo}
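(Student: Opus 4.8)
The plan is to prove the equivalence $K^{-,b}(\mathcal{P}) \cong D^b(A)$ in detail and to deduce $K^{+,b}(\mathcal{I}) \cong D^b(A)$ by the symmetric argument over $A^{\mathrm{op}}$: the $k$-linear duality $\Hom_k(-,k)$ identifies $\mathcal{I}$ with the projective $A^{\mathrm{op}}$-modules and reverses complexes, so the statement for injectives over $A$ is exactly the statement for projectives over $A^{\mathrm{op}}$. For the projective case, consider the functor $F\colon K^{-,b}(\mathcal{P}) \to D^b(A)$ obtained as the composite of the inclusion $K^{-,b}(\mathcal{P}) \hookrightarrow K^{-}(A\text{-mod})$ with the localization $K^{-}(A\text{-mod}) \to D^{-}(A)$; since every object of $K^{-,b}(\mathcal{P})$ has bounded homology, $F$ takes values in the full subcategory $D^b(A) \subseteq D^{-}(A)$. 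It remains to show that $F$ is essentially surjective and fully faithful.

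For essential surjectivity, let $X \in D^b(A)$; after a good truncation we may assume $X$ is a bounded-above complex of $A$-modules with bounded homology. Because $A$-mod has enough projectives, the usual degreewise construction produces a quasi-isomorphism $p\colon P \to X$ with $P$ a bounded-above complex of projective modules, and since $p$ is a quasi-isomorphism, $P$ has bounded homology. Thus $P \in K^{-,b}(\mathcal{P})$ and $F(P) \cong X$. The technical heart of the argument is the vanishing statement: if $P \in K^{-}(\mathcal{P})$ and $N \in K^{-}(A\text{-mod})$ is acyclic, then $\Hom_{K^{-}(A\text{-mod})}(P,N) = 0$. One proves this by constructing, for a given chain map $f\colon P \to N$, a contracting homotopy by descending induction on the degree: acyclicity of $N$ provides the surjections onto its cycle modules and projectivity of each $P^{n}$ allows the successive lifts.

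Full faithfulness then follows formally. The vanishing statement says precisely that every $P \in K^{-,b}(\mathcal{P})$ is \emph{homotopically projective}, and for such $P$ the localization induces a bijection $\Hom_{K^{-}(A\text{-mod})}(P,M) \xrightarrow{\ \sim\ } \Hom_{D^{-}(A)}(P,M)$ for every complex $M$: given a roof $P \xleftarrow{s} Z \xrightarrow{g} M$ with $s$ a quasi-isomorphism, complete $s$ to a triangle $Z \xrightarrow{s} P \to C \to Z[1]$ with $C$ acyclic; since $C$ and $C[-1]$ are acyclic, applying $\Hom_{K^{-}}(P,-)$ shows $s_{\ast}$ is bijective, so $\id_{P}$ lifts to some $h\colon P \to Z$ with $sh \simeq \id_{P}$ and the roof is the image of $gh$, while injectivity is the same diagram chase with the quasi-isomorphism placed on the target. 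Taking $M = Q \in K^{-,b}(\mathcal{P})$ and using that $D^b(A)$ is a full subcategory of $D^{-}(A)$ yields $\Hom_{K^{-,b}(\mathcal{P})}(P,Q) \cong \Hom_{D^b(A)}(FP,FQ)$, which completes the proof.

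The main obstacle is the vanishing statement of the second paragraph: writing down the contracting homotopy and checking that the inductive lifts are consistent in every degree. By contrast, the construction of projective resolutions, the truncation reduction, and the calculus-of-fractions bookkeeping in the last paragraph are routine, and the passage from the projective case to the injective case via $k$-duality is immediate.
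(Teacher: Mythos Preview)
The paper does not supply its own proof of this theorem; it is stated with the citation \cite[10.4.8]{We} and used as background. Your argument is correct and is essentially the standard proof one finds in Weibel: construct the comparison functor as the composite of inclusion and localization, obtain essential surjectivity from bounded-above projective resolutions, and deduce full faithfulness from the fact that bounded-above complexes of projectives are homotopically projective (the vanishing of $\Hom_{K^-}(P,N)$ for acyclic $N$), with the injective case following by $k$-duality. There is nothing to compare, and nothing to correct.
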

We identify an $A$-module $X$ with the complex that has entry $X$ in degree $0$ and
entry $0$ in all other degrees. By abuse of notation we call this
complex $X$. A complex with non-zero entry in only one degree is also called a stalk complex.
Note that $A$-mod is equivalent to a full subcategory of $D^b(A)$ using this embedding.

\smallskip

Let $N$ be a left $A$-module and $\cdots  \stackrel{d_P^2} \to P_1
\stackrel{d_P^1}\to P_0\to N$ its minimal projective resolution.
Let $N\to I_0 \stackrel{d_I^0} \to I_1  \stackrel{d_I^1}\to \cdots
$ be its minimal injective resolution. Then we denote throughout
this paper by $pN$ the complex with $(pN)^i:=P_{-i}$ and $d^i:=
d_P^{-i}$ for $i \le 0$ and $(pN)^i:=0$ for $i>0$. Similarly we
define $iN$ to be the complex with $(iN)^n:=I_{n}$ and $d^n:=
d_I^{n}$ for $n \ge 0$ and $(iN)^n:=0$ for $n<0$.

\begin{rem}\label{contractible1}
Note that all indecomposable contractible complexes in
$Comp^{-,b}(\mathcal{P})$ have up to shift the form \[ \cdots \to 0 \to P \stackrel{\id} \to
P \to 0 \to \cdots
\] for an indecomposable projective module $P$ of $A$. We denote
such a complex where $P$ occurs in degree 0 and $1$ by $\bar P$.
\end{rem}
Contractible complexes are projective in $Comp^{-,b}(\mathcal{P})$ as the following lemma shows.
\begin{lemma}\label{projective complexes}
A contractible complex in $Comp^{-,b}(\mathcal{P})$ is projective.
\end{lemma}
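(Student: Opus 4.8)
The plan is to reduce to the indecomposable disk complexes described in Remark \ref{contractible1} and then exploit how rigid a map out of such a complex is. First I would recall that a contractible complex is homotopy equivalent to zero, hence split exact, so it decomposes as a direct sum of shifts of disk complexes $D(M)\colon \cdots \to 0 \to M \xrightarrow{\id} M \to 0 \to \cdots$. When the ambient complex lies in $Comp^{-,b}(\mathcal P)$ every entry is a finite-dimensional projective $A$-module, so each $M$ occurring is a finite-dimensional projective and, decomposing it into indecomposables, our contractible complex is a direct sum $\bigoplus_i \bar P_i[n_i]$ with each $P_i$ an indecomposable projective (this recovers Remark \ref{contractible1} at the level of indecomposable summands). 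This direct sum is locally finite — only finitely many summands are nonzero in any fixed degree — so it is genuinely an object of $Comp^{-,b}(\mathcal P)$, and $\Hom$ out of it is the product of the $\Hom$'s out of the summands. Hence it suffices to show (a) that each $\bar P$ with $P$ indecomposable projective is a projective object, and (b) that a locally finite direct sum of projective objects is again projective; (b) is formal, since a chain map and its chain condition can be tested degreewise, where the sum is finite, so one simply lifts each component separately and assembles.

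For (a) the key is the natural isomorphism
\[ \Hom_{Comp^{-,b}(\mathcal P)}(\bar P, Z) \;\cong\; \Hom_A(P, Z^0) \]
for every complex $Z$: a chain map $f\colon \bar P \to Z$ is determined by $f^0\colon P \to Z^0$, because compatibility with the differentials forces $f^1 = d_Z^0 f^0$, then $d_Z^1 f^1 = d_Z^1 d_Z^0 f^0 = 0$ holds automatically, and $f$ vanishes in all other degrees; conversely any module map $f^0$ defines a chain map this way. (Replacing $\bar P$ by $\bar P[n]$ only shifts the relevant degree, giving $\Hom_A(P, Z^{-n})$.) Now let $g\colon Y \to Z$ be an epimorphism of the kind relevant to projectivity here — a degreewise surjective chain map, for instance the deflation $g$ in a conflation $0 \to L \to Y \xrightarrow{g} Z \to 0$ whose terms are split short exact in each degree. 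Then $g^0\colon Y^0 \to Z^0$ is surjective, and since $P$ is projective in $A$-mod the map $\Hom_A(P, g^0)$ is surjective; via the isomorphism above, $\Hom_{Comp^{-,b}(\mathcal P)}(\bar P, g)$ is surjective. Thus $\bar P$, hence each $\bar P[n]$, and by (b) any locally finite direct sum of them, is a projective object — which by the reduction above is exactly the assertion for an arbitrary contractible complex.

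The step I expect to require the most care is the bookkeeping at the very start: pinning down precisely which maps are to count as epimorphisms in the additive, non-abelian category $Comp^{-,b}(\mathcal P)$ (equivalently, which exact structure is meant), and checking that the decomposition of a contractible complex into shifted disk complexes $\bar P[n]$ respects the boundedness-above and finite-dimensionality built into $\mathcal P$, so that the resulting possibly infinite direct sum really is an object of the category and $\Hom$ out of it distributes over the sum. Once those points are settled, the mathematical content is just the projectivity of $P$ in $A$-mod, transported through the displayed isomorphism.
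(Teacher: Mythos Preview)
Your argument is correct and is essentially the same as the paper's: both reduce to a single shifted disk complex $\bar P$ and lift a given chain map against a degreewise surjection by using the projectivity of $P$ in $A$-mod to lift the degree-$0$ component, the degree-$1$ component then being forced by the differential. Your packaging via the natural isomorphism $\Hom(\bar P, Z)\cong \Hom_A(P,Z^0)$ is just a cleaner way of saying what the paper does explicitly with the lift $q$ and $h^1=d_C^0 q$; the extra bookkeeping you flag (the meaning of epimorphism, the direct-sum decomposition) is glossed over in the paper but does not change the approach.
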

\begin{proof}
Let $P$ be an indecomposable projective module and $\bar P$ the
associated indecomposable contractible complex. By the previous
remark, all contractible complexes are direct sums of shifts of
such indecomposable complexes. Let $f:C \to D$ be a surjective map
of complexes in $Comp^{-,b}(\mathcal{P})$. This means that $f^i$
is surjective for all $i\in \Z$. Suppose there is a map $g: \bar P
\to D$ of complexes.  Then we construct a map $h: \bar P \to C$ as
follows. Since $P$ is projective, there is a map $q: P \to C^{0}$
such that $f^{0}  q=g^{0}$. We then set $h^1:= d_C^{0} q $ and
$h^{0}:=q$ and $h^i:=0$ for all other degrees. We visualize this
in the diagram

\[ \xymatrix{ \cdots \ar[r] & 0 \ar[rr] \ar[rd] \ar[dd] & & P \ar[rr]_{\id} \ar[rd]_q \ar[dd]_{g^{0}} & & P \ar[rr] \ar[rd]_{d_C^{0} q} \ar[dd]_{g^1} & & 0 \ar[dd] \ar[r] & \cdots  \\
\cdots \ar[rr] & & C^{-1} \ar[rr]  \ar[ld]_{f^{-1}} & & C^{0} \ar[rr] \ar[ld]_{f^{0}} & & C^{1}  \ar[ld]_{f^{1}} \ar[rr] & & \cdots \\
\cdots \ar[r]& D^{-1} \ar[rr]  & & D^{0} \ar[rr] & & D^1 \ar[rr] & & D^2 \ar[r] & \cdots } \]

Then $h$ is a map of complexes. Furthermore $f^1  h^1=  f^1
d_C^{0} q= d_D^{0} f^{0} q= d_D^{0} g^{0}=g^1$. This gives the
proof.
\end{proof}
Finally we define for a complex $X$, the complex $\sigma^{\le
n}(X)$ to be the complex with $\sigma^{\le n}(X)^i:=X^i$ for $i
\le n$ and $d_{\sigma^{\le n}(X)}^i:=d^i_X$ for $i <n$ and
$\sigma^{\le n}(X)^i:=0$ for $i >n$. We define $\sigma^{\ge n}(X)$
analogously.

\bigskip

Next we introduce Auslander-Reiten theory for triangulated categories. We state the existence conditions for Auslander-Reiten
triangles in the bounded derived category of a finite-dimensional algebra and
prove some properties that will be needed in the other sections.

\smallskip
For an introduction to triangulated categories we refer to \cite[I.1.1]{H1}.
Let $\mathcal{T}$ be a triangulated category with translation functor $T$.

\begin{defi}\cite[I.4.1]{H1} [Auslander-Reiten triangles] \label{triangle}
A distinguished triangle $X\stackrel{u } \to Y  \stackrel{ v} \to Z \stackrel{ w} \to TX$ is called an Auslander-Reiten
triangle if the following conditions are satisfied:

(1) The objects $X,\ Z$ are indecomposable

(2) The map $w$ is non-zero

(3) If $f:W \to Z$ is not a retraction, then there exists $f':
W\to Y$ such that $v f'=f$.
\end{defi}
We introduce the following conditions.
(3') If $f:X \to W $ is not a section, then there exists $f': Y\to
W$ such that $f'  u=f$.

(3'') If $f:W \to Z$ is not a retraction, then $w  f =0$.

By \cite[I.4.2]{H1} we have that the condition (1)+ (2)+(3) is
equivalent to the condition (1)+ (2)+ (3') and also to the
condition (1)+ (2)+ (3'').

\smallskip

The condition (2) is equivalent to

(2') The map $u$ is not a section.

(2'') The map $v$ is not a retraction.

\smallskip

We refer to $w$ as the connecting homomorphism of an Auslander-Reiten triangle.
We say that the Auslander-Reiten triangle $X \to Y \to Z \to TX$ starts in $X$, has middle term $Y$
and ends in $Z$. Note also that an Auslander-Reiten triangle is uniquely determined
up to isomorphisms of triangles by the isomorphism class of the
element it ends or starts with.  The Auslander-Reiten translation $\tau$ is defined as the functor on the set of all isomorphism classes of indecomposable objects that appear at the end of an Auslander-Reiten triangle to the set of indecomposable objects that appear at the start of an Auslander-Reiten sequence. Then $\tau$ sends the isomorphism class of $Z$ to  the isomorphism class of $X$.

The Auslander-Reiten translation of $A$-mod will be denoted by $\tau_A$ in this paper to avoid confusion.

Analogously to the classical
Auslander-Reiten theory we can define irreducible maps, minimal maps,
 left almost split maps and right almost split maps as in \cite[IV.1.1, IV.1.4]{ASS}.
Irreducible maps here have the same properties as in the case of
Artin algebras. (see \cite[IV.1.8, IV.1.10]{ASS})

\begin{lemma}\label{irred}
Let $N, M \in \mathcal{T}$ and let $f:N \to M$ be an irreducible map in $D^b(A)$.

(1) Let $N \stackrel{g } \to Q \to  E \to TN$ be the Auslander-Reiten triangle,
then there is a retraction $s: Q \to M $ such that $f=s  g$.

(2) Let $L \to B \stackrel{h } \to  M \to TL$ be an Auslander-Reiten
triangle, then there is a section $r: N \to B $ such that $f=h
 r$.
\end{lemma}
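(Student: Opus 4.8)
The plan is to mimic the classical proof of the analogous statement for Artin algebras (\cite[IV.1.8, IV.1.10]{ASS}), transposing it into the triangulated setting using the equivalence of conditions (1)+(2)+(3), (3'), (3'') recorded above. Since the two parts are dual — part (1) uses a left almost split / right almost split argument on the Auslander-Reiten triangle ending nowhere but starting in $N$, and part (2) uses the Auslander-Reiten triangle ending in $M$ — I would prove (1) in detail and remark that (2) follows by the dual argument (working in $\mathcal{T}^{op}$, or equivalently using condition (3') in place of (3)).

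For part (1): since $f:N\to M$ is irreducible, it is in particular not a section. Let $N\stackrel{g}\to Q\to E\to TN$ be the Auslander-Reiten triangle starting in $N$; by property (3') (left almost split), since $f$ is not a section there exists $s:Q\to M$ with $sg=f$. The point is to show $s$ can be taken to be a retraction. Here irreducibility of $f$ enters: in the factorization $f=sg$, the map $g$ is not a section (it is the first map of an Auslander-Reiten triangle, so by (2') it is not a section), hence by the definition of irreducible map $s$ must be a retraction. Thus $s$ is the desired retraction and $f=sg$. Part (2) is dual: using the Auslander-Reiten triangle $L\to B\stackrel{h}\to M\to TL$ ending in $M$, property (3) (right almost split) applied to $f:N\to M$ — which is not a retraction, again because it is irreducible, hence not an isomorphism, and $M$ is... — gives $r:N\to B$ with $hr=f$; then since $h$ is not a retraction (by (2'')), irreducibility of $f$ forces $r$ to be a section.

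The one point that needs a little care — and which I expect to be the main (minor) obstacle — is verifying that $f$ irreducible implies $f$ is neither a section nor a retraction, so that the almost split properties (3) and (3') actually apply. By definition an irreducible map is not a section and not a retraction, so this is essentially immediate from the definition of irreducible map in \cite[IV.1.1]{ASS}; I would just note it explicitly. A second small point is that the existence of the relevant Auslander-Reiten triangles is being assumed in the statement (they are given as hypotheses "Let $N\to Q\to E\to TN$ be the Auslander-Reiten triangle"), so no existence question arises here. The whole argument is thus a short transcription of the classical one, and the only genuinely triangulated-category ingredient used is the equivalence of the conditions (3), (3'), (3'') from \cite[I.4.2]{H1} together with the characterizations (2'), (2'') of condition (2).
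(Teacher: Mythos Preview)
Your argument is correct and is exactly the classical proof from \cite[IV.1.8, IV.1.10]{ASS} transposed to the triangulated setting via the conditions (2'), (2''), (3), (3') recorded in the paper. The paper itself does not spell out a proof of this lemma; it simply remarks, just before stating it, that ``irreducible maps here have the same properties as in the case of Artin algebras (see \cite[IV.1.8, IV.1.10]{ASS})'' --- so your proposal faithfully supplies precisely the omitted argument.
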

Let from now on $k$ be an algebraically closed field. If
$\mathcal{T}$ is a Krull-Schmidt category we define the
Auslander-Reiten quiver to be the labelled graph
$\Gamma(\mathcal{T})$ with vertices the isomorphism classes of
indecomposable objects. For two indecomposable objects $X, Y$
there are $d_{X,Y}$ arrows from $X$ to $Y$ where $d_{X,Y}:=
$dim$_{k} $Irr$(X,Y)$. (see \cite[I.4.7]{H1} and \cite[2.7]{K})

We call a connected component of $ \Gamma(\mathcal{T})$ an Auslander-Reiten component.

\medskip

From now on let $\mathcal{T}=D^b(A)$. By \cite[2.6]{K} we have
that $D^b(A)$ is a Krull-Schmidt category. Therefore Auslander-Reiten components of $D^b(A)$ are well-defined.
Using \cite[2.6]{K} combined with \cite[B.2]{Kr} shows that $Comp^{-,b}(\mathcal{P})$ and $Comp^{+,b}(\mathcal{I})$ are also Krull-Schmidt categories. They are full subcategories of the abelian category $Comp(A)$ which are closed under direct summands and extensions. We can therefore refer to \cite{A} for their Auslander-Reiten sequences.
\begin{lemma}\label{irred2} \cite [I.4.3, I.4.5]{H1}
Let $X \stackrel{ u} \to M \stackrel{v } \to  Z \to TX$ be an Auslander-Reiten triangle and
$M \cong M_1\oplus M_2$, where $M_1$ is indecomposable. Let $i:M_1
\to M$ be an inclusion and $p: M\to M_1$ a projection. Then
$v i: M_1\to Z$ and $p  u: X \to M$ are irreducible
maps.  Furthermore $u$ is minimal left almost split and $v$ is
minimal right almost split.
\end{lemma}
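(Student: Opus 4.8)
The plan is to deduce everything from the reformulations (2'), (2''), (3') recorded above, together with the fact that in the Krull--Schmidt category $D^b(A)$ the endomorphism ring of an indecomposable object is local.

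First I would note that $u$ is left almost split: by (2') it is not a section, and by (3') every map $X\to W$ that is not a section factors through $u$; dually $v$ is right almost split. The real content is left minimality of $u$. Given $\phi\colon M\to M$ with $\phi u=u$, the identity $\phi u=u\cdot\id_X$ makes the left square commute, so by the axioms of a triangulated category it extends to a morphism of triangles
\[
\xymatrix{
X\ar[r]^u\ar[d]_{\id}&M\ar[r]^v\ar[d]_{\phi}&Z\ar[r]^w\ar[d]_{\psi}&TX\ar[d]^{\id}\\
X\ar[r]^u&M\ar[r]^v&Z\ar[r]^w&TX
}
\]
for some $\psi\colon Z\to Z$; in particular $w\psi=w$, i.e. $w(\id_Z-\psi)=0$. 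As $Z$ is indecomposable, $\End(Z)$ is local; were $\psi$ not invertible, $\id_Z-\psi$ would be invertible, forcing $w=0$ and contradicting (2). Hence $\psi$ is an automorphism, and since $\id_X$ is too, a morphism of distinguished triangles two of whose vertical components are invertible has the third invertible, so $\phi$ is an automorphism. Thus $u$ is minimal left almost split, and dually $v$ is minimal right almost split.

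Next, write $M=M_1\oplus M_2$ with $M_1$ indecomposable, let $i\colon M_1\to M$ and $p\colon M\to M_1$ be the structure maps, and put $g:=pu\colon X\to M_1$. Then $g$ is not a section, since $sg=\id_X$ would give $(sp)u=\id_X$ and make $u$ a section; and $g$ is not a retraction, since a split epimorphism from the indecomposable $X$ onto $M_1\neq 0$ is an isomorphism (its splitting yields an idempotent of the local ring $\End(X)$, necessarily $\id_X$), and an invertible component of $u$ again makes $u$ a section. Now suppose $g=\beta\alpha$ with $\alpha\colon X\to W$ not a section; I would need to show $\beta\colon W\to M_1$ is a retraction. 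Since $u$ is left almost split there is $\gamma\colon M\to W$ with $\gamma u=\alpha$. The key device is the endomorphism
\[
\phi:=i\beta\gamma+(\id_M-ip)\colon M\to M .
\]
From $\beta\gamma u=\beta\alpha=g=pu$ one computes $\phi u=u$, so $\phi$ is an automorphism by left minimality; and using $pi=\id_{M_1}$ one computes $\phi i=i\,(\beta\gamma i)$. Hence $\beta\gamma i$ is a split monomorphism of the indecomposable $M_1$, so an automorphism, whence $\beta\cdot\bigl(\gamma i(\beta\gamma i)^{-1}\bigr)=\id_{M_1}$ and $\beta$ is a retraction. Therefore $pu$ is irreducible, and the irreducibility of $vi\colon M_1\to Z$ follows dually from minimal right almost splitness of $v$.

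The main obstacle is Step 2: extending the commuting square to a genuine morphism of triangles and then passing from ``$\psi$ invertible'' to ``$\phi$ invertible'' --- this is where the triangulated (not merely additive) structure is essential --- together with the identity $\phi i=i(\beta\gamma i)$ in the last step, which is exactly the mechanism converting left minimality into irreducibility of each component map. Everything else is routine manipulation of split monomorphisms, split epimorphisms and local endomorphism rings.
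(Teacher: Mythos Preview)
Your proof is correct. The paper does not provide its own proof of this lemma but simply cites Happel \cite[I.4.3, I.4.5]{H1}, so there is no in-paper argument to compare against. Your approach---extending $\phi u=u$ to a morphism of triangles via (TR3), using locality of $\End(Z)$ to force $\psi$ invertible, then the two-out-of-three isomorphism property for morphisms of triangles to get left minimality of $u$, and finally the endomorphism $\phi=i\beta\gamma+(\id_M-ip)$ to upgrade minimality to irreducibility of each component---is precisely the standard argument one finds in Happel's book, so your write-up is in line with the cited source. One small cosmetic point: in the ``not a retraction'' step the idempotent $tg\in\End(X)$ could a priori be $0$ rather than $\id_X$; you implicitly use $M_1\neq 0$ to exclude this, which is fine since $M_1$ is assumed indecomposable, but it might be worth making that one sentence explicit.
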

Let $\nu_A$ denote the Nakayama functor of $A$ and let \[\nu_A^{-1}:=\Hom_A( \Hom_k(-,k) , A).\]
We denote by $\nu$ the left derived functor of $\nu_A$ on $D^b(A)$ and by $ \nu^{-1} $
the right derived functor of $\nu^{-1}_A$. Then $\nu$ maps a
complex $X\in Comp^{b}(\mathcal{P})$ to $\nu(X) \in Comp^b(\mathcal{I})
$, where $\nu (X)^i:=\nu_A (X^i)$ and
 $d^i_{\nu (X)}:= \nu_A(d^i_X)$ for all $i\in \Z$.

The conditions for the existence of Auslander-Reiten triangles in a
triangulated category have been determined in
\cite[I.2.4]{VBR}. It is shown that a triangulated category admits Auslander-Reiten
triangles, that is for every indecomposable element $X$ there is
an Auslander-Reiten triangle that ends in $X$ and one that starts
in $X$, if and only if the category has a Serre functor.

A specialization of this result is given in the next theorem for
the case of $D^b(A)$.
\begin{theo}\label{existence}\cite[1.4]{H2}
(1) Let $Z \in K^{-,b}(\mathcal{P})$ be indecomposable. Then there
exists an Auslander-Reiten triangle ending in $Z$ if and only if
$Z \in K^{b}(\mathcal{P})$. The triangle is of the form $\nu
(Z)[-1] \to Y \to Z \to \nu(Z)$  for some $Y \in K^{-,b}(\mathcal{P})$.

(2) Let $X \in K^{+,b}(\mathcal{I})$ be indecomposable, then there exists an Auslander-Reiten
triangle starting in $X$ if and only if $X \in
K^{b}(\mathcal{I})$. The triangle is of the form $X\to Y \to \nu^{-1}(X)[1]  \to X[1]$  for some $Y \in K^{-,b}(\mathcal{P})$.
\end{theo}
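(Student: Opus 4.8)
The plan is to reduce the existence of an Auslander--Reiten triangle ending in $Z$ to the representability of the functor $D\Hom_{D^b(A)}(Z,-)\colon D^b(A)\to\mod k$, via the existence criterion \cite[I.2.4]{VBR}, and then to show that this functor is representable exactly when $Z$ is perfect, the representing object being $\nu(Z)$.

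First I would establish a half Serre duality: for every perfect complex $W\in K^b(\mathcal P)$ and every $X\in D^b(A)$ there is an isomorphism
\[
\Hom_{D^b(A)}(W,X)\;\cong\;D\Hom_{D^b(A)}(X,\nu(W)),
\]
natural in both variables. When $W$ is a stalk complex concentrated in degree $0$ at a finitely generated projective $P$ this is immediate: $\Hom_{D^b(A)}(P,X)=\Hom_A(P,H^0(X))$ since $P$ is projective, $\Hom_A(P,N)\cong D\Hom_A(N,\nu_A P)$ is the usual Nakayama adjunction, and $\Hom_A(H^0(X),\nu_A P)=\Hom_{D^b(A)}(X,\nu_A P)$ since $\nu_A P=\nu(P)$ is injective. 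For general $W$ I would pass to all of $K^b(\mathcal P)$ by d\'evissage: both sides are cohomological functors of $W$, $K^b(\mathcal P)$ is generated as a triangulated category by the stalk projectives, and comparing the two sides along the stupid truncation triangles $\sigma^{\ge n+1}W\to W\to\sigma^{\le n}W\to$ with the five lemma propagates the isomorphism from the stalk case.

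Granting this, the ``if'' part of (1) is immediate: for $Z\in K^b(\mathcal P)$, taking $W=Z$ and dualizing gives $D\Hom_{D^b(A)}(Z,-)\cong\Hom_{D^b(A)}(-,\nu(Z))$, so the functor is representable by $\nu(Z)\in K^b(\mathcal I)\subseteq D^b(A)$; then \cite[I.2.4]{VBR} yields an Auslander--Reiten triangle ending in $Z$, and unravelling the correspondence shows it has the form $\nu(Z)[-1]\to Y\to Z\xrightarrow{w}\nu(Z)$, with $w$ corresponding to a nonzero functional on the local ring $\End_{D^b(A)}(Z)$ that vanishes on its radical. For the converse, suppose an Auslander--Reiten triangle $X\to Y\to Z\xrightarrow{w}X[1]$ ends in $Z$, and set $M:=X[1]$; then \cite[I.2.4]{VBR} gives a natural isomorphism $D\Hom_{D^b(A)}(Z,-)\cong\Hom_{D^b(A)}(-,M)$. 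I would test this on shifted simple modules. Taking the minimal representative $Z=(\cdots\to P^{-1}\to P^0)$ in $K^{-,b}(\mathcal P)$, minimality forces every null-homotopic chain map $Z\to S[n]$ to be zero, so $\Hom_{D^b(A)}(Z,S[n])=\Hom_A(P^{-n},S)$, which is nonzero precisely when $S$ occurs in the semisimple top of $P^{-n}$; whereas, since $M$ has bounded cohomology, $\Hom_{D^b(A)}(S,M[-n])=0$ for $n\gg0$. Matching the two sides for all simple $S$ forces $P^{-n}=0$ for $n\gg0$, hence $Z\in K^b(\mathcal P)$. Part (2) I would then obtain from (1) by applying it over $A^{\mathrm{op}}$ and transporting along the duality $D\colon D^b(A)\to D^b(A^{\mathrm{op}})$, which interchanges $K^b(\mathcal P)$ with $K^b(\mathcal I)$, interchanges $\nu$ with $\nu^{-1}$, and carries an Auslander--Reiten triangle ending in an object to one starting in its dual.

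The hard part will be the ``only if'' direction, namely ruling out a representing object for $D\Hom_{D^b(A)}(Z,-)$ when $Z$ is not perfect. The device that makes it work is to probe with the shifted simple modules $S[n]$: this turns unboundedness of a non-perfect $Z$ into nonvanishing of $\Hom_{D^b(A)}(Z,S[n])$ for infinitely many $n$ tending to $+\infty$, while boundedness of the cohomology of any fixed $M\in D^b(A)$ kills $\Hom_{D^b(A)}(S[n],M)$ for $n\gg0$. Along the way one also needs the standard facts that objects of $K^{-,b}(\mathcal P)$ admit minimal representatives and that \cite[I.2.4]{VBR} indeed yields the stated shape of the triangle; both are routine given the Krull--Schmidt property recorded above.
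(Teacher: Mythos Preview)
The paper does not prove this theorem; it is quoted from Happel \cite[1.4]{H2} without argument. Your proposal is a correct reconstruction of the standard proof: the half Serre duality $\Hom_{D^b(A)}(W,-)\cong D\Hom_{D^b(A)}(-,\nu(W))$ for perfect $W$, obtained by d\'evissage from stalk projectives along the brutal truncation triangles, immediately yields the ``if'' direction together with the shape of the triangle, and your probe with shifted simples for the converse is exactly the right device. The computation $\Hom_{D^b(A)}(Z,S[n])\cong\Hom_A(P^{-n},S)$ for a minimal representative of $Z$ is correct (minimality makes both the chain-map condition and the null-homotopy condition vacuous, since all differentials are radical), and the vanishing of $\Hom_{D^b(A)}(S[n],M)$ for $n\gg0$ follows from the standard $t$-structure because $M$ has bounded cohomology. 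The passage to part~(2) via $k$-duality over $A^{\mathrm{op}}$ is routine.

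One point should be tightened. In the converse you infer from the existence of a single Auslander--Reiten triangle $X\to Y\to Z\xrightarrow{w}X[1]$ that $D\Hom_{D^b(A)}(Z,-)\cong\Hom_{D^b(A)}(-,X[1])$, citing \cite[I.2.4]{VBR}. That reference is the \emph{global} statement (the category has Auslander--Reiten triangles if and only if it has a Serre functor); what you need is the \emph{pointwise} equivalence ``$Z$ admits an Auslander--Reiten triangle ending in it if and only if $D\Hom(Z,-)$ is representable, and then the representing object is $\tau(Z)[1]$''. This is indeed proved in \cite{VBR}, but as the lemma immediately preceding I.2.4, and the implication from the Auslander--Reiten triangle to representability is not automatic: one must check that for a linear form $t$ on $\Hom(Z,X[1])$ with $t(w)\neq0$ and $t$ vanishing on $w\cdot\rad\End(Z)$, the induced pairing $\Hom(Z,V)\times\Hom(V,X[1])\to k$, $(f,g)\mapsto t(gf)$, is non-degenerate for every $V$. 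You should either cite the precise pointwise statement or insert this short argument; once it is in place your proof goes through as written.
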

From this result we deduce that the translation $\tau$ is given by
$\nu[-1]$ and $\tau$ is natural equivalence from
$K^b(\mathcal{P})$ to $K^b(\mathcal{I})$. So every
Auslander-Reiten triangle is isomorphic to
\[\nu(X)[-1] \to \cone(w)[-1] \to X \stackrel{ w} \to \nu(X)\] for $X \in
K^{b}(\mathcal{P})$ and some map $w: X \to \nu (X)$.

\medskip

Not all irreducible maps appear in Auslander-Reiten triangles.
\begin{lemma}
Let $f:B \to C$ be an irreducible map in $D^b(A)$ that does not
appear in an Auslander-Reiten triangle. Then $B, C \not \in
K^{b}(\mathcal{P})$ and $ B, C \not \in K^b(\mathcal{I})$.
\end{lemma}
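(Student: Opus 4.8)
The plan is to prove the contrapositive-flavoured statement by contradiction: suppose $f:B\to C$ is irreducible and that, say, $C\in K^b(\mathcal P)$ (the case $B\in K^b(\mathcal P)$ and the two injective cases are symmetric via $\nu^{\pm1}$ and Theorem \ref{existence}). By Theorem \ref{existence}, since $C$ is indecomposable and lies in $K^b(\mathcal P)$, there is an Auslander-Reiten triangle ending in $C$, say $\nu(C)[-1]\stackrel{u}\to Y\stackrel{v}\to C\stackrel{w}\to\nu(C)$. Now apply Lemma \ref{irred}(1) to the irreducible map $f:B\to C$: wait — that lemma concerns a triangle \emph{starting} in $N$; the relevant one here is Lemma \ref{irred}(2), which says that given the Auslander-Reiten triangle $\nu(C)[-1]\to Y\stackrel{v}\to C\to\nu(C)$ ending in $C$, there is a section $r:B\to Y$ with $f=v\,r$. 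This already exhibits $f$ as a map factoring through the middle term of an Auslander-Reiten triangle, and by Lemma \ref{irred2} the components of $v$ are themselves irreducible. The point is then to argue that $f$ itself "is" one of these irreducible maps appearing in the triangle, contradicting the hypothesis that $f$ does not appear in any Auslander-Reiten triangle.

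The key steps, in order: (i) reduce to the case $C\in K^b(\mathcal P)$ using the symmetry provided by Theorem \ref{existence}(1) and (2) (an Auslander-Reiten triangle ending in $C$ versus one starting in $B$) together with $K^b(\mathcal P)\cong K^b(\mathcal I)$ under $\nu$; (ii) produce the Auslander-Reiten triangle at $C$ and the section $r:B\to Y$ with $f=v\,r$ from Lemma \ref{irred}(2); (iii) since $r$ is a section, $B$ is a direct summand of $Y$, so write $Y\cong B\oplus Y'$ and observe that under this identification $f$ is (up to the automorphism making $r$ the canonical inclusion) exactly the composite of the inclusion $B\hookrightarrow Y$ with $v$; (iv) invoke Lemma \ref{irred2}: the restriction of $v$ to the indecomposable summand $B$ is irreducible and appears in the Auslander-Reiten triangle $\nu(C)[-1]\to Y\to C\to\nu(C)$, and this restriction is precisely $f$ (or differs from it by an automorphism of $B$, hence still "appears" in an Auslander-Reiten triangle in the sense that matters). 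This contradicts the assumption on $f$, so $C\notin K^b(\mathcal P)$; running the symmetric arguments disposes of the remaining three memberships.

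The main obstacle I expect is step (iv): making precise the claim that "$f$ appears in an Auslander-Reiten triangle." One must check that if $f=v\,r$ with $r$ a section and $B$ indecomposable, then after choosing an automorphism $\phi$ of $Y$ with $\phi r = \iota_B$ the canonical inclusion, the map $f$ equals $(v\phi^{-1})\iota_B$, and $v\phi^{-1}$ is still the right-hand map of an Auslander-Reiten triangle (composing an Auslander-Reiten triangle's middle map with an automorphism of the middle term yields an isomorphic triangle). Then $(v\phi^{-1})\iota_B$ is by Lemma \ref{irred2} an irreducible map occurring in that triangle, and it equals $f$ on the nose. One should also handle the degenerate possibility $Y'=0$, i.e. $Y\cong B$, in which case $v$ itself is irreducible and equals $f$ up to automorphism — no real difficulty, but worth a remark. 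A secondary, purely bookkeeping point is confirming that the four cases $B\in K^b(\mathcal P)$, $C\in K^b(\mathcal P)$, $B\in K^b(\mathcal I)$, $C\in K^b(\mathcal I)$ are genuinely covered: $C\in K^b(\mathcal P)$ and $B\in K^b(\mathcal I)$ are handled by triangles ending in $C$ (Lemma \ref{irred}(2)) and starting in $B$ (Lemma \ref{irred}(1)) respectively, and $B\in K^b(\mathcal P)$, $C\in K^b(\mathcal I)$ follow by applying Theorem \ref{existence} with the roles of start/end interchanged, using $\tau=\nu[-1]:K^b(\mathcal P)\to K^b(\mathcal I)$.
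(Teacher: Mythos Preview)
Your argument for the cases $C\in K^b(\mathcal P)$ and $B\in K^b(\mathcal I)$ is correct and matches the paper's one-line treatment: an Auslander--Reiten triangle ending in $C$ (resp.\ starting in $B$) exists by Theorem~\ref{existence}, and Lemma~\ref{irred} forces $f$ to appear in it.

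The gap is in the remaining two cases. You claim that $B\in K^b(\mathcal P)$ and $C\in K^b(\mathcal I)$ ``follow by applying Theorem~\ref{existence} with the roles of start/end interchanged, using $\tau=\nu[-1]:K^b(\mathcal P)\to K^b(\mathcal I)$.'' But there is no such symmetry. Knowing $B\in K^b(\mathcal P)$ gives you an Auslander--Reiten triangle \emph{ending} in $B$, not starting in $B$; Lemma~\ref{irred} then says nothing about maps \emph{out} of $B$. The equivalence $\tau:K^b(\mathcal P)\to K^b(\mathcal I)$ does not let you conclude $B\in K^b(\mathcal I)$, nor can you apply $\nu$ to $f$ since $C$ need not lie in $K^b(\mathcal P)$. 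So your reduction is empty, and these two cases remain open in your proposal.

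The paper handles them by a genuinely different argument. Having already shown $C\notin K^b(\mathcal P)$, one represents $C$ in $Comp^{-,b}(\mathcal P)$ (hence unbounded below) and $B$ by a bounded complex of projectives, say with $B^i=0$ for $i<n$. Then the chain map $f$ factors as $B\xrightarrow{g}\sigma^{\ge n-1}(C)\xrightarrow{h} C$. Since $C$ is unbounded below, $h$ is not a retraction; since $f$ is irreducible (hence not a section), $g$ is not a section. This contradicts irreducibility of $f$. The case $C\in K^b(\mathcal I)$ is the dual truncation argument in $Comp^{+,b}(\mathcal I)$. You need some argument of this kind; the Auslander--Reiten-triangle machinery alone does not reach these two cases.
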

\begin{proof}
By \ref{existence} and \ref{irred} it is clear that $B \not \in
K^b(\mathcal{I})$ and $C \not \in K^b(\mathcal{P})$. Let us assume
that $B \in K^b(\mathcal{P})$ and let $n\in \N$ be minimal such
that $B^n \not =0$. Then $f$ factorizes through $\sigma^{\ge
n-1}(C)$, where $C$ is represented as a complex in
$Comp^{-,b}(\mathcal{P})$. Let $f=h  g$ be this
factorization, then $g$ is not a section, as $f$ is not a section
and $h$ is not a retraction as $ \sigma^{\ge n-1}(C) \not \cong
C$. This is a contradiction to the fact that $f$ is irreducible.
Therefore $B  \not \in K^b(\mathcal{P})$. Analogously, we can show
that $C \not \in K^b(\mathcal{I})$.
\end{proof}
More results on these irreducible maps can be found in \cite{HKR}.

\section{Auslander-Reiten triangles}
In this section we deduce some properties of Auslander-Reiten triangles and introduce stable components.
The following lemma determines the relation between irreducible
maps, retractions and sections in $K^{-,b}(\mathcal{P})$ and
$Comp^{-,b}(\mathcal{P})$.
Note that by duality the same is true if we replace $K^{-,b}(\mathcal{P})$
by $K^{+,b}(\mathcal{I})$ and $Comp^{-,b}(\mathcal{P})$ by $Comp^{+,b}(\mathcal{I})$.
\begin{lemma}\label{retraction}
Let $ B, C \in Comp^{-,b}(\mathcal{P})$ be complexes
that are not contractible. Let $f:B \to C$ be a map of complexes.

(1) Let $C, B$ be indecomposable. The map $f$ is irreducible in $Comp^{-,b}(\mathcal{P})$ if and only
if $f$ is irreducible in $K^{-,b}(\mathcal{P})$.

(2) Let $C$ be indecomposable. The map $f$ is a retraction in $Comp^{-,b}(\mathcal{P})$ if and only
if $f$ is a retraction in $K^{-,b}(\mathcal{P})$.

(3) Let $B$ be indecomposable. The map $f$ is a section in
$Comp^{-,b}(\mathcal{P})$ if and only if $f$ is a section in
$K^{-,b}(\mathcal{P})$.
\end{lemma}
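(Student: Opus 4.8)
The plan is to exploit the fact that the homotopy category $K^{-,b}(\mathcal{P})$ is obtained from $Comp^{-,b}(\mathcal{P})$ by killing precisely the maps that factor through contractible complexes (Lemma~\ref{contractible}(1)), together with Lemma~\ref{projective complexes}, which says contractible complexes are projective objects in $Comp^{-,b}(\mathcal{P})$. The guiding principle is that because $B$ and $C$ are assumed \emph{not} contractible, the ``noise'' introduced by contractible summands can always be split off, so passing between the two categories does not create or destroy retractions, sections, or irreducible factorizations. Throughout I write $[f]$ for the homotopy class of a map $f$ of complexes.

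First I would prove (2). If $f$ is a retraction in $Comp^{-,b}(\mathcal{P})$ it is obviously one in the quotient category $K^{-,b}(\mathcal{P})$. Conversely, suppose $[f]:B\to C$ has a section $[g]:C\to B$ in $K^{-,b}(\mathcal{P})$, so $fg$ is homotopic to the identity on $C$; then $fg=\id_C+h$ where $h$ factors through a contractible complex $T$. Since $C$ is indecomposable and not contractible, I want to conclude that $fg$ is already an automorphism of $C$ in $Comp^{-,b}(\mathcal{P})$: the endomorphism ring of $C$ is local, and $h$, factoring through the projective object $T$, lies in a two-sided ideal that I must show is contained in $\rad\End(C)$. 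Concretely, if $fg$ were not invertible it would be nilpotent (local ring, indecomposable $C$), and then $\id_C = fg - h$ would be a difference of two non-units; the point is to see $h\in\rad\End(C)$, which follows because an endomorphism of the indecomposable non-contractible $C$ that factors through a contractible complex cannot be an isomorphism (an isomorphism would exhibit $C$ as a summand of $T$, hence contractible). Once $fg$ is invertible in $Comp^{-,b}(\mathcal{P})$, $f$ is a genuine retraction there. Statement (3) is dual: replace $fg$ by $gf\in\End(B)$ and run the same argument using that $B$ is indecomposable and not contractible.

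For (1): an irreducible map in either category is in particular neither a section nor a retraction, and by (2) and (3) these two properties transfer back and forth, so the only real content is the factorization condition. If $f$ is irreducible in $Comp^{-,b}(\mathcal{P})$ and $[f]=[h][g]$ in $K^{-,b}(\mathcal{P})$ with $g:B\to E$, $h:E\to C$, then $f-hg$ factors through a contractible complex $T$; I would replace $E$ by $E\oplus T$ and adjust $g,h$ so that $f=h'g'$ holds on the nose in $Comp^{-,b}(\mathcal{P})$, and then irreducibility of $f$ forces $[g']$ to be a section or $[h']$ a retraction, which by (2) and (3) descends to $K^{-,b}(\mathcal{P})$ after possibly stripping the contractible summand $T$ again — here one uses that $g'$ or $h'$ being a section/retraction in $Comp^{-,b}(\mathcal{P})$ gives the same in $K^{-,b}(\mathcal{P})$, the easy direction of (2)/(3). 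For the converse, if $f$ is irreducible in $K^{-,b}(\mathcal{P})$ and $f=hg$ in $Comp^{-,b}(\mathcal{P})$, then $[f]=[h][g]$, so $[g]$ is a section or $[h]$ is a retraction in $K^{-,b}(\mathcal{P})$; by (2) and (3), $g$ is a section or $h$ is a retraction in $Comp^{-,b}(\mathcal{P})$, giving irreducibility of $f$ there.

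The main obstacle is the bookkeeping with contractible summands in part (1) and, more subtly, verifying in part (2) that a map factoring through a contractible complex lies in the radical of $\End(C)$ — i.e. that it cannot be an isomorphism — which is exactly where the hypothesis ``$C$ not contractible'' is used, via Lemma~\ref{projective complexes} and Remark~\ref{contractible1} to control the shape of contractible complexes. Everything else is formal manipulation in a Krull--Schmidt category with the two categories related by the ideal of null-homotopic maps.
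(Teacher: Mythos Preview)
Your proposal is correct and follows essentially the same route as the paper's proof: parts (2) and (3) via the local endomorphism ring of the indecomposable non-contractible complex, and part (1) by enlarging the middle term of a homotopy factorization by a contractible summand to get a genuine factorization in $Comp^{-,b}(\mathcal{P})$. Your write-up of (2) is in fact more explicit than the paper's about why the null-homotopic endomorphism lies in $\rad\End(C)$, but the underlying argument is identical.
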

\begin{proof}  We first give a proof of (2). Let $f: B \to C$ be a retraction in
$Comp^{-,b}(\mathcal{P})$. Then $f$ is clearly a retraction in
$K^{-,b}(\mathcal{P})$. Let $f$ be a retraction in $K^{-,b}(\mathcal{P})$,
then there is a map $g:C\to B$ such that $f  g$ is homotopic
to $\id_C$. Therefore $f  g - \id_C$ factors through a
contractible complex $ P$ via $s: C \to P$ and $t: P \to C$. Then
$(t,f)  \binom{-s}{g}= \id_C$. As $C$ does not have a
contractible summand, we have that $f  g$ is an isomorphism.
The proof of (3) is analogous.

\smallskip

We prove (1). Let $f: B\to C$ be an irreducible map in $K^{-,b}(\mathcal{P})$, then by
(2) and (3) $f$ is also an irreducible map in
$Comp^{-,b}(\mathcal{P})$. Suppose now that $f$ is irreducible in
$Comp^{-,b}(\mathcal{P})$ and let $g h$ be homotopic to $f$ for
some $g:D\to C$ and $h: B\to D$. Then $g h- f$ factors through a contractible
complex $ P$ via $s: B \to  P$ and $ t:  P \to C$. So $f= (g,-t)  \binom{h}{s}$ factors through $D \oplus  P$ in $Comp^{-,b}(\mathcal{P})$.
Therefore $\binom{h}{s}$ is a section in $Comp^{-,b}(\mathcal{P})$ or $(g,-t)$ is a retraction in $Comp^{-,b}(\mathcal{P})$.
This means that $h$ is a section in $K^{-,b}(\mathcal{P})$ or $g$ is a retraction in $K^{-,b}(\mathcal{P})$. Therefore $f$ is irreducible
in $K^{-,b}(\mathcal{P})$.
\end{proof}
We can therefore choose for an irreducible map in $K^{-,b}(\mathcal{P})$ an irreducible map in $Comp^{-,b}(\mathcal{P})$ that represents this map. For the rest of this paper all irreducible maps in $K^{-,b}(\mathcal{P})$ or $K^{+,b}(\mathcal{I})$ will be represented by irreducible maps in $Comp^{-,b}(\mathcal{P})$ or $Comp^{+,b}(\mathcal{I})$ respectively.

\begin{lemma}\label{correlation1}
Let $$w:=0 \to E \to C \stackrel{\sigma} \to P \to 0 $$ be an Auslander-Reiten sequence in $Comp^{-,b}(\mathcal{P})$. Then $P \in Comp^b(\mathcal{P})$. We identify $\nu(P)$ with an
indecomposable complex in $Comp^{-,b}(\mathcal{P})$. Then $w$ is isomorphic to \[ 0 \to \nu (P)[-1] \to \cone(w)[-1]  \to P \to 0 .\]
\end{lemma}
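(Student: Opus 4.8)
The plan is to transfer the situation to $D^b(A)=K^{-,b}(\mathcal{P})$, recognise $w$ there as an Auslander--Reiten triangle ending in $P$, apply Theorem \ref{existence}, and then descend the resulting description to a statement about conflations in $Comp^{-,b}(\mathcal{P})$. One input is elementary and already fixes the shape of the claimed sequence: as $P$ is a complex of projective modules, each $0\to E^i\to C^i\to P^i\to 0$ splits, so $w$ is degreewise split; choosing splittings $C^i=E^i\oplus P^i$ compatible with the inclusion of $E$ and the projection onto $P$, the differential of $C$ takes the form $\left(\begin{smallmatrix} d_E^i & w^i\\ 0 & d_P^i\end{smallmatrix}\right)$, and $d_C^2=0$ forces $w=(w^i)$ to be a chain map $P\to E[1]$, a representative of the connecting morphism of the induced triangle. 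This presentation gives an isomorphism of complexes $C\xrightarrow{\ \sim\ }\cone(w)[-1]$ that is $\id_E$ on the subcomplex $E$ and induces $\id_P$ on the quotient $P$; thus $w$ is already isomorphic, as a conflation in $Comp^{-,b}(\mathcal{P})$, to $0\to E\to\cone(w)[-1]\to P\to 0$ (here $w$ in ``$\cone(w)$'' is that connecting morphism, which clashes notationally with the name of the sequence). So it remains to prove $P\in Comp^b(\mathcal{P})$ and that $E=\nu(P)[-1]$ under the identification in the statement.

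Both follow once the distinguished triangle $E\to C\xrightarrow{\sigma}P\xrightarrow{w}E[1]$ induced by $w$ in $D^b(A)$ is shown to be an Auslander--Reiten triangle, which I would verify against Definition \ref{triangle}. The term $P$ is indecomposable by hypothesis. The term $E$, being the left-hand term of an Auslander--Reiten sequence, is indecomposable and not contractible (were $E$ contractible, $w$ would split, a contractible complex being injective in $Comp^{-,b}(\mathcal{P})$); hence $\End_{Comp^{-,b}(\mathcal{P})}(E)$ is local and its nonzero quotient $\End_{D^b(A)}(E)$ is again local, so $E$ is indecomposable in $D^b(A)$. For $w\neq 0$: if $\sigma$ were a retraction in $D^b(A)$ the triangle would split, $C\cong E\oplus P$ in $D^b(A)$, and passing to minimal subcomplexes (these are preserved by homotopy equivalence and additive on direct sums) would exhibit $P$ as a direct summand of $C$ in $Comp^{-,b}(\mathcal{P})$, contradicting the fact that the right-hand term of an Auslander--Reiten sequence is never a summand of its middle term; so $w\neq 0$. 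Finally $\sigma$ is right almost split in $D^b(A)$: if $f\colon W\to P$ in $D^b(A)$ is not a retraction, a representing chain map with $W\in Comp^{-,b}(\mathcal{P})$ is not a retraction in $Comp^{-,b}(\mathcal{P})$ either, hence factors through $\sigma$ there ($\sigma$ being right almost split in $Comp^{-,b}(\mathcal{P})$ with $P$ indecomposable), and this descends to $D^b(A)$; contractible direct summands of $C$ cause no trouble, being zero in $D^b(A)$. So $E\to C\to P\to E[1]$ is an Auslander--Reiten triangle.

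Now Theorem \ref{existence}(1) applies: the existence of an Auslander--Reiten triangle ending in $P$ forces $P\in K^b(\mathcal{P})$, and that triangle is isomorphic, as a triangle, to one of the form $\nu(P)[-1]\to\cone(w')[-1]\to P\to\nu(P)$; by uniqueness of Auslander--Reiten triangles ending in a given object, $E\cong\nu(P)[-1]$ in $D^b(A)$. To improve $P\in K^b(\mathcal{P})$ to $P\in Comp^b(\mathcal{P})$: $P$ is an indecomposable, non-contractible, bounded-above complex of projectives, so (writing $P$ as its minimal subcomplex plus a contractible complex and invoking indecomposability) $P$ equals its minimal subcomplex, which is a direct summand of the bounded complex of projectives representing $P$ in $K^b(\mathcal{P})$, hence bounded. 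Likewise $E$, being indecomposable and non-contractible, coincides with its minimal subcomplex, so $E[1]$ is the (up to isomorphism unique) minimal complex of projectives representing $\nu(P)\in D^b(A)$ -- precisely the indecomposable complex with which $\nu(P)$ is identified in the statement -- and therefore $E=\nu(P)[-1]$ in $Comp^{-,b}(\mathcal{P})$. Feeding this into the conflation isomorphism of the first paragraph gives the lemma.

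The step I expect to be the main obstacle is the passage from the homotopy-theoretic $P\in K^b(\mathcal{P})$ to the genuine boundedness $P\in Comp^b(\mathcal{P})$, together with the matching identification of $E[1]$ as the chosen complex-of-projectives representative of $\nu(P)$. This relies on the structure theory of bounded-above complexes of projective modules -- existence and essential uniqueness of a minimal direct summand, and the fact that a homotopy equivalence of two such complexes restricts to an isomorphism of their minimal subcomplexes -- and on careful bookkeeping of contractible summands when moving between $Comp^{-,b}(\mathcal{P})$ and $D^b(A)$; the two auxiliary inputs (``the end term is not a summand of the middle term'' and the transfer of right-almost-splitness via Lemma \ref{retraction}) also need a little care.
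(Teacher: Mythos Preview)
Your proof is correct and follows essentially the same strategy as the paper: show that the induced distinguished triangle $E\to C\to P\xrightarrow{w}E[1]$ is an Auslander--Reiten triangle by checking Definition~\ref{triangle} (using Lemma~\ref{retraction} for the right almost split condition), then invoke Theorem~\ref{existence} to conclude $P\in K^b(\mathcal{P})$ and $E\cong\nu(P)[-1]$, and finally use indecomposability of $P$ to pass to $Comp^b(\mathcal{P})$. The paper's verification of $w\neq 0$ is shorter than yours: since the Auslander--Reiten sequence does not split, Lemma~\ref{contractible}(2) gives directly that the connecting morphism is not homotopic to zero, bypassing your appeal to minimal subcomplexes.
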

\begin{proof}
We consider $w \in Hom_{D^b(A)}(P, E[1])$ and the distinguished
triangle $E \to C \to P \stackrel{w} \to E[1]$ corresponding to the
Auslander-Reiten sequence $w$.

As this sequence does not split, the map $w$ is not homotopic to
zero by \ref{contractible}. Let $M \in Comp^{-,b}(\mathcal{P})$
and let $f: M \to P$ be a map in $Comp^{-,b}(\mathcal{P})$
representing a map in $K^{-,b}(\mathcal{P})$ that is not a
retraction. Then $f$ is not a retraction in
$Comp^{-,b}(\mathcal{P})$ by \ref{retraction} and factors through
$\sigma$ in $Comp^{-,b}(\mathcal{P})$. By \ref{triangle} \[E \to C
\to P \stackrel{w} \to E[1]\] is an Auslander-Reiten triangle. It
follows from \ref{existence} that $P \in K^b(\mathcal{P})$. As $P$
is indecomposable, we have $P \in Comp^b(\mathcal{P})$ and $E
\cong \nu(P)[-1]$.
\end{proof}
The next theorem determines the relation between Auslander-Reiten
triangles in $D^b(A)$ and Auslander-Reiten sequences in
$Comp^{-,b}(\mathcal{P})$. The analogous statement for self-injective algebras was given by
\cite[2.3, 2.2]{W}.
\begin{theo}\label{correlation}
Let $P\in Comp^{b}(\mathcal{P}) $ be an indecomposable complex
that is not contractible. We identify $\nu(P)$ with an
indecomposable complex in $Comp^{-,b}(\mathcal{P})$. Let $w:P
\to \nu (P)$ be a map in
$Comp^{-,b}(\mathcal{P})$. Then
\[ 0\to \nu (P)[-1] \to \cone(w)[-1]  \to P \to 0 \] is an Auslander-Reiten
sequence in $Comp^{-,b}(\mathcal{P}) $ if and only if $w$ induces
an Auslander-Reiten triangle in $D^b(A)$.
\end{theo}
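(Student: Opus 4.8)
The plan is to prove the two implications separately, exploiting the bridge already built in Lemmas \ref{retraction} and \ref{correlation1} between $Comp^{-,b}(\mathcal{P})$ and $K^{-,b}(\mathcal{P}) \cong D^b(A)$. Throughout, the triangle $\nu(P)[-1] \to \cone(w)[-1] \to P \stackrel{w}\to \nu(P)$ is a distinguished triangle in $D^b(A)$ by the standard axioms, and since $P$ has no contractible summand and $P \in Comp^b(\mathcal{P})$, the short exact sequence $0 \to \nu(P)[-1] \to \cone(w)[-1] \to P \to 0$ of complexes (the termwise split sequence defining the cone) represents this triangle.

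First I would prove the ``only if'' direction. Assume the displayed sequence is an Auslander-Reiten sequence in $Comp^{-,b}(\mathcal{P})$. Since it does not split, the projection $\cone(w)[-1] \to P$ is not a retraction in $Comp^{-,b}(\mathcal{P})$, hence the sequence is non-split as a triangle, so by Lemma \ref{contractible} the map $w$ is not homotopic to zero; thus condition (2) of Definition \ref{triangle} holds. Condition (1) is immediate: $P$ is indecomposable by hypothesis and $\nu(P)[-1]$ is indecomposable because $\nu$ is an equivalence. For condition (3), take $f: W \to P$ in $D^b(A)$ that is not a retraction; represent $W$ by a complex in $Comp^{-,b}(\mathcal{P})$ and $f$ by a map of complexes. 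If $W$ has a contractible summand we may split it off (a contractible summand contributes nothing to maps in $K^{-,b}(\mathcal{P})$, and by Lemma \ref{projective complexes} such summands are projective so the lifting through them is automatic), so assume $W$ is not contractible. By Lemma \ref{retraction}(2), $f$ is not a retraction in $Comp^{-,b}(\mathcal{P})$, so the Auslander-Reiten property of the sequence gives a factorization of $f$ through $\cone(w)[-1] \to P$ in $Comp^{-,b}(\mathcal{P})$, which descends to the required factorization in $D^b(A)$. Hence we obtain an Auslander-Reiten triangle.

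Conversely, assume $w$ induces an Auslander-Reiten triangle in $D^b(A)$. I would argue that the displayed sequence is an Auslander-Reiten sequence in $Comp^{-,b}(\mathcal{P})$ in the sense of \cite{A}: the objects $P$ and $\nu(P)[-1]$ are indecomposable and not contractible (a contractible complex is zero in $D^b(A)$, so could not sit in an Auslander-Reiten triangle); the sequence is non-split in $Comp^{-,b}(\mathcal{P})$ because $w$ is not homotopic to zero, again by Lemma \ref{contractible}; and the right-almost-split property must be checked. Let $g: M \to P$ in $Comp^{-,b}(\mathcal{P})$ be a map that is not a retraction. By Lemma \ref{retraction}(2) the induced map in $K^{-,b}(\mathcal{P}) \cong D^b(A)$ is not a retraction, so the Auslander-Reiten triangle property gives a factorization through $\cone(w)[-1] \to P$ in $D^b(A)$, i.e.\ up to homotopy. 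The homotopy discrepancy factors through a contractible complex, which by Lemma \ref{projective complexes} is projective in $Comp^{-,b}(\mathcal{P})$, and since the termwise-split epimorphism $\cone(w)[-1] \to P$ is a genuine surjection of complexes, the homotopy can be absorbed to yield an honest factorization $g = (\cone(w)[-1] \to P) \circ g'$ in $Comp^{-,b}(\mathcal{P})$. This establishes the Auslander-Reiten property, and minimality follows as usual since the sequence ends in the indecomposable $P$.

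The main obstacle is the last point in the converse: passing from a factorization ``up to homotopy'' in $D^b(A)$ to an actual factorization of complexes in $Comp^{-,b}(\mathcal{P})$. The key is that the epimorphism $\cone(w)[-1] \to P$ in the chosen triangle is termwise split surjective, so lifting against it is a purely term-by-term projectivity question, and the homotopy correction term, factoring through a contractible (hence by Lemma \ref{projective complexes} projective) complex, can be rerouted through this surjection. I would also need to handle the reduction to non-contractible $M$ (resp.\ $W$) carefully, but this is routine since contractible summands are detected by, and split off compatibly with, the passage to the homotopy category.
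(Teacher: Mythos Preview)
Your proposal is correct and follows essentially the same approach as the paper: for the direction ``AR sequence $\Rightarrow$ AR triangle'' you reproduce the argument the paper packages in Lemma~\ref{correlation1}, and for ``AR triangle $\Rightarrow$ AR sequence'' you carry out exactly the paper's strategy of lifting a homotopy factorization to a strict one by routing the contractible error term through the termwise-split surjection $\cone(w)[-1]\to P$ via Lemma~\ref{projective complexes}. Your only deviation is the extra care taken with contractible summands of $M$ and $W$, which the paper suppresses when invoking Lemma~\ref{retraction}; this is harmless bookkeeping rather than a different idea.
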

\begin{proof}
Let $w:P \to \nu (P)$ induce an Auslander-Reiten triangle in $D^b(A)$. We have an exact sequence
\[0 \to \nu (P)[-1] \to \cone(w)[-1] \stackrel{\sigma}\to P \to 0 .\] Furthermore $\nu (P)[-1]$ and $P$ are indecomposable.
Let $M $ be a complex in $Comp^{-,b}(\mathcal{P})$ and let $f:M
\to P$ be a non-split map in $Comp^{-,b}(\mathcal{P})$. Then by
\ref{retraction}, $f$ is not a retraction in
$K^{-,b}(\mathcal{P})$. Therefore there is a map $f_1:M \to
\cone(w)[-1]$ such that $\sigma f_1=f$ in $K^{-,b}(\mathcal{P}).$
Then $f-\sigma f_1$ factors through a contractible complex $P_2$.
Let $f-\sigma f_1= g  h$  where $h: M \to P_2$ and $g:P_2 \to
P$. As $P_2$ is projective in $Comp^{-,b}(\mathcal{P})$ by
\ref{projective complexes} there is a map $s:P_2 \to \cone(w)[-1]$ such that
$g=\sigma  s$. We set $f'=f_1 +s h$, then $\sigma f'=\sigma
f_1 +\sigma s h=f$. Therefore $\sigma$ is right almost split and the exact sequence is therefore an Auslander-Reiten
sequence.
Assume now that the exact sequence

\[0 \to \nu(P)[-1]  \to \cone(w)[-1]  \stackrel{\sigma}\to P \to 0 \] is an Auslander-Reiten sequence. Then the statement follows from the proof of \ref{correlation1}.
\end{proof}
Note that if $P\in Comp^b(\mathcal{P})$ is contractible, then
there is no Auslander-Reiten sequence in $Comp^{-,b}(\mathcal{P})$
that ends in $P$, as contractible complexes are  projective
objects in $Comp^{-,b}(\mathcal{P})$ by \ref{projective
complexes}.

\smallskip

We call an Auslander-Reiten component $\Lambda$ stable, if all
vertices appear at the start and at the end of an Auslander-Reiten
triangle. By \ref{existence} this is equivalent to the fact that
all vertices in the component are in $K^b(\mathcal{I})$ and
$K^b(\mathcal{P})$.

The Auslander-Reiten quiver of $D^b(A)$ does not contain
loops by \cite[2.2.1]{XZ}. So we can apply Riedtmann's Structure Theorem
\cite[p.206]{Rie}.

\begin{cor}
Let $\Lambda$ be a stable Auslander-Reiten component of $D^b(A)$.
Then $\Lambda \cong \Z[T]/I$ where $T$ is a tree and $I$ is
an admissible subgroup of aut$(\Z[T])$.
\end{cor}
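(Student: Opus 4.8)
The plan is to identify $\Lambda$, equipped with the Auslander-Reiten translation $\tau=\nu[-1]$, with a connected stable translation quiver in the sense of Riedtmann, and then to invoke her Structure Theorem \cite[p.206]{Rie}.

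First I would check that $\tau$ restricts to a bijection on the vertices of $\Lambda$. Since $\Lambda$ is stable, every vertex lies in $K^b(\mathcal{P})\cap K^b(\mathcal{I})$ by \ref{existence}, so each $Z\in\Lambda$ is the end term of an Auslander-Reiten triangle $\tau Z\to Y\to Z\to \nu(Z)$ and the start term of one. By the remark following \ref{existence}, $\tau=\nu[-1]$ is induced by an equivalence $K^b(\mathcal{P})\to K^b(\mathcal{I})$, hence is injective on isomorphism classes; and since $\tau Z$ is joined to $Z$ by irreducible maps (Lemma \ref{irred2}), it lies in the same component, so $\tau$ maps $\Lambda$ into $\Lambda$, and dually $\tau^{-1}$ does too. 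Thus $\tau\colon\Lambda\to\Lambda$ is a bijection.

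Next I would verify the mesh condition. Fix $Z\in\Lambda$ with Auslander-Reiten triangle $\tau Z\stackrel{u}\to Y\stackrel{v}\to Z\to\nu(Z)$ and write $Y\cong\bigoplus_i Y_i^{d_i}$ with the $Y_i$ pairwise non-isomorphic indecomposables. By \ref{irred2}, composing $u$ with a projection onto $Y_i$ and $v$ with an inclusion of $Y_i$ yields irreducible maps $\tau Z\to Y_i$ and $Y_i\to Z$; conversely, since $v$ is right almost split and $u$ is left almost split, every irreducible map into $Z$ factors through $v$ and every irreducible map out of $\tau Z$ factors through $u$. Hence the $Y_i$ are exactly the immediate predecessors of $Z$ and the immediate successors of $\tau Z$. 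Because $k$ is algebraically closed, all the relevant division rings $\End(Y_i)/\rad$ are $k$, so the quiver carries no nontrivial valuation and the standard symmetry of irreducible maps gives $d_{Y_i,Z}=d_{\tau Z,Y_i}$ for every $i$ (cf.\ \cite[I.4.7]{H1}, \cite[2.7]{K}); thus the arrows $\tau Z\to Y_i$ and $Y_i\to Z$ are matched in number, and $\Lambda$ is a stable translation quiver. As Auslander-Reiten components are locally finite and $\Gamma(D^b(A))$ has no loops by \cite[2.2.1]{XZ}, and $\Lambda$ is connected by definition, Riedtmann's Structure Theorem applies and yields $\Lambda\cong\Z[T]/I$ with $T$ a tree and $I$ an admissible subgroup of $\operatorname{aut}(\Z[T])$.

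The main obstacle is the second step: one must be careful that $\tau$ is genuinely a bijection of the whole component (not merely partially defined on it) and that the arrow multiplicities on the two sides of each mesh agree. The latter is precisely where algebraic closedness of $k$ enters — it forces the valuations to be trivial, so that the unvalued version of Riedtmann's theorem can be used; without it one would only obtain a valued stable translation quiver of the form $\Z[T,v]/I$.
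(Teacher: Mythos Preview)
Your proposal is correct and follows the same route as the paper: verify that $\Lambda$ is a connected stable translation quiver without loops (citing \cite[2.2.1]{XZ}) and invoke Riedtmann's Structure Theorem \cite[p.206]{Rie}. The paper's own proof is in fact just the one-sentence remark preceding the corollary, so your argument is a fleshed-out version of exactly that. One small caveat: your claim that $\tau Z$ lies in the same component as $Z$ uses that the middle term $Y$ is nonzero, which fails precisely when $A$ is simple (see \ref{semi-simple}); in that degenerate case each component is a single vertex $A_1\cong\Z[A_1]/\Z$, so the conclusion still holds, but you should treat it separately.
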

It is easy to see for which algebras stable components appear.
\begin{cor}\label{trans prop}
All complexes that appear in Auslander-Reiten triangles are contained in stable components if and
only if $A$ is Gorenstein.
\end{cor}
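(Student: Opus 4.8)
The plan is to translate the statement into homological algebra via the standard characterisation of Gorenstein algebras: $A$ is Gorenstein if and only if $K^b(\mathcal{P})$ and $K^b(\mathcal{I})$ coincide as subcategories of $D^b(A)$. So the first step is to record this fact. For the ``only if'' part one uses that over a Gorenstein algebra a finite-dimensional module has finite projective dimension exactly when it has finite injective dimension, and hence the same holds for bounded complexes (equivalently, $K^b(\mathcal{P})$ is the thick subcategory of $D^b(A)$ generated by $A$ and $K^b(\mathcal{I})$ the one generated by $D(A)=\Hom_k(A,k)$, and over a Gorenstein algebra $A$ has finite injective and $D(A)$ finite projective dimension). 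For the ``if'' part one tests the equality on the stalk complexes $A$ and $D(A)$: from $A\in K^b(\mathcal{P})=K^b(\mathcal{I})$ one reads off that $A$ has finite injective dimension as a left module, and from $D(A)\in K^b(\mathcal{I})=K^b(\mathcal{P})$, $D(A)$ being an injective left module, that $\Pd_A D(A)<\infty$, i.e. $A$ has finite injective dimension as a right module. Put $\mathcal{K}:=K^b(\mathcal{P})\cap K^b(\mathcal{I})$; it is a triangulated subcategory of $D^b(A)$ closed under direct summands, and by Theorem \ref{existence} the functors $\nu$ and $\nu^{-1}$ restrict to mutually inverse equivalences between $K^b(\mathcal{P})$ and $K^b(\mathcal{I})$.

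For the forward implication, suppose every complex occurring in an Auslander-Reiten triangle lies in a stable component. If $P\in K^b(\mathcal{P})$ is indecomposable, then by Theorem \ref{existence} there is an Auslander-Reiten triangle ending in $P$, so $P$ occurs in an Auslander-Reiten triangle and, by hypothesis, in a stable component; hence $P\in K^b(\mathcal{I})$. Since $D^b(A)$ is Krull-Schmidt and both subcategories are closed under finite direct sums and summands, this gives $K^b(\mathcal{P})\subseteq K^b(\mathcal{I})$, and applying the same argument to Auslander-Reiten triangles starting in indecomposables of $K^b(\mathcal{I})$ yields the reverse inclusion. So $K^b(\mathcal{P})=K^b(\mathcal{I})$ and $A$ is Gorenstein.

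For the converse, assume $A$ is Gorenstein, so $K^b(\mathcal{P})=K^b(\mathcal{I})=\mathcal{K}$. Let $W$ be an indecomposable summand of a term of an Auslander-Reiten triangle $E\to Y\to Z\to TE$; then $Z\in K^b(\mathcal{P})=\mathcal{K}$ and $E\cong\nu(Z)[-1]\in\mathcal{K}$ by \ref{existence}, so $Y\in\mathcal{K}$ because $\mathcal{K}$ is triangulated, and thus $W\in\mathcal{K}$. To see that the whole component $\Lambda$ of $W$ is stable, I would argue along the irreducible maps: for $X\in\Lambda\cap\mathcal{K}$ there are, by \ref{existence}, Auslander-Reiten triangles $\nu(X)[-1]\to B\to X\to\nu(X)$ and $X\to Q\to\nu^{-1}(X)[1]\to X[1]$ whose outer terms lie in $\mathcal{K}$, so $B,Q\in\mathcal{K}$; by Lemma \ref{irred} every neighbour of $X$ in $\Lambda$ is a direct summand of $B$ or of $Q$, hence lies in $\mathcal{K}$. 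Starting from $W$ and iterating, $\Lambda\subseteq\mathcal{K}=K^b(\mathcal{P})\cap K^b(\mathcal{I})$, so $\Lambda$ is stable.

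The routine parts are the two closure-under-sums-and-summands reductions and the bookkeeping with the precise shapes of the triangles supplied by Theorem \ref{existence}. The real content sits in the homological characterisation of Gorenstein algebras used at the start; and the point that needs genuine care is keeping the \emph{middle} terms of the Auslander-Reiten triangles, and more generally all neighbours of a vertex in a component, inside $K^b(\mathcal{P})\cap K^b(\mathcal{I})$ --- which is exactly where one invokes that this intersection is a triangulated subcategory stable under $\nu$ and $\nu^{-1}$, together with Lemma \ref{irred}.
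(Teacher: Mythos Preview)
The paper does not give a proof of this corollary; it merely prefaces the statement with ``It is easy to see for which algebras stable components appear.'' Your argument is correct and is precisely the natural one: reduce via the standard homological characterisation of Gorenstein algebras to the equality $K^b(\mathcal{P})=K^b(\mathcal{I})$ inside $D^b(A)$, and then read both implications off Theorem~\ref{existence}. The forward direction is immediate, and your converse is fine.

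One small simplification for the converse: once $A$ is Gorenstein and $\mathcal{K}:=K^b(\mathcal{P})=K^b(\mathcal{I})$, the inductive neighbour-chasing via Lemma~\ref{irred} is not really needed. The lemma immediately following Theorem~\ref{existence} already shows that any irreducible map touching an object of $K^b(\mathcal{P})$ or $K^b(\mathcal{I})$ appears in an Auslander--Reiten triangle, and since the end terms and hence the middle term of every Auslander--Reiten triangle lie in the triangulated subcategory $\mathcal{K}$, no arrow of the Auslander--Reiten quiver can leave $\mathcal{K}$. Thus the union of all components meeting $\mathcal{K}$ is exactly the set of indecomposables in $\mathcal{K}$, and each such component is stable because every object of $\mathcal{K}$ both starts and ends an Auslander--Reiten triangle. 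This is only a cosmetic shortening of what you wrote.
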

If $A$ has finite global dimension, then the
Auslander-Reiten quiver is a stable translation quiver. If $A$ is
self-injective and not semi-simple, then the stable Auslander-Reiten components are isomorphic to $\Z[A_{\infty}]$ by \cite[3.7]{W}. The non-stable components have been determined in \cite[5.7]{HKR}.
\section{Finite and bounded Auslander-Reiten components}
In this section, we determine the tree class of bounded and finite
stable Auslander-Reiten components. We show that finite stable
components can only appear if $A$ is simple. Bounded stable
components appear if and only if the representation type of  $D^b(A)$
is finite. This is also equivalent to the fact that the
Auslander-Reiten quiver has a component with tree class finite
Dynkin. We describe the Auslander-Reiten quiver concretely in
these cases.

We start with the following easy lemma.
\begin{lemma}\label{semi-simple}
The following are equivalent:

(1) There is a stable Auslander-Reiten component of $D^b(A)$ isomorphic
to $A_1$;

(2) The bounded derived category of $A$ has an Auslander-Reiten
triangle with middle term zero;

(3) The algebra $A$ is simple;

(4) The Auslander-Reiten quiver of $D^b(A)$ is the union of
infinitely many components $A_1$.
\end{lemma}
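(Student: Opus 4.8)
The plan is to prove the chain of implications $(3)\Rightarrow(2)\Rightarrow(1)\Rightarrow(3)$ and $(3)\Rightarrow(4)\Rightarrow(1)$, so that all four statements become equivalent. The implications $(4)\Rightarrow(1)$ and the triviality that $A_1$ is in particular a stable component are immediate, so the real content lies in $(3)\Rightarrow(2)$, $(2)\Rightarrow(1)$, $(1)\Rightarrow(3)$ and $(3)\Rightarrow(4)$.

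First I would treat $(3)\Rightarrow(2)$ and $(3)\Rightarrow(4)$ together. If $A$ is simple, then $A\cong M_n(k)$ is a semisimple algebra, so $A$-mod is semisimple and $D^b(A)$ is equivalent to the bounded derived category of the semisimple category, whose indecomposable objects are exactly the shifts $S[i]$ of the (unique, up to isomorphism) simple module $S$. Each such $S[i]$ lies in $K^b(\mathcal{P})=K^b(\mathcal{I})$, so by Theorem~\ref{existence} there is an Auslander-Reiten triangle ending in it; since $A$ is simple it is in particular self-injective with $\nu_A=\id$, hence $\tau=\nu[-1]=[-1]$ and the Auslander-Reiten triangle ending in $S[i]$ is $S[i-1]\to Y\to S[i]\xrightarrow{w} S[i]$. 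Here $w\in\Hom_{D^b(A)}(S[i],S[i])\cong k$ is nonzero, hence an isomorphism, so $\cone(w)=0$ and $Y=0$. This gives $(2)$, and it also shows that $S[i-1]\to 0\to S[i]\xrightarrow{\sim} S[i]$ are the only Auslander-Reiten triangles, so that in the Auslander-Reiten quiver there are no arrows at all; each $S[i]$ is its own connected component, isomorphic to $A_1$, and there are infinitely many of them, giving $(4)$.

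Next, $(2)\Rightarrow(1)$: suppose $X\xrightarrow{u}0\to Z\xrightarrow{w}TX$ is an Auslander-Reiten triangle with zero middle term. Then from the triangle $Z\xrightarrow{w}TX\to T0=0\to TZ$ we get that $w$ is an isomorphism, so $Z\cong TX$ and both are indecomposable. By Lemma~\ref{irred2} the map $u\colon X\to 0$ is minimal left almost split and $v\colon 0\to Z$ is minimal right almost split; since the middle term is zero, the component of $\Gamma(D^b(A))$ containing $X$ (equivalently $Z$) has no arrows entering $Z$ and none leaving $X$, and using that the component is stable — every vertex ends and starts an Auslander-Reiten triangle, all with zero middle term by the structure just described, or more directly by Riedtmann $\Lambda\cong\Z[T]/I$ and a vertex with no incident arrows forces $T=A_1$, $I$ trivial — the component is precisely the single vertex $Z\cong TX$, i.e.\ isomorphic to $A_1$. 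I would phrase this last step carefully: the cleanest argument is that $\mathrm{Irr}(-,Z)=0$ and $\mathrm{Irr}(X,-)=0$ (no nonzero irreducible maps in or out), so $Z$ is isolated in the quiver and its component is $A_1$; this is step I expect to require the most care, since one must rule out that $X\not\cong Z$ could still sit in a larger component, but $Z\cong TX$ and $T$ is an autoequivalence permuting components, forcing $X$ and $Z$ into the same one.

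Finally, $(1)\Rightarrow(3)$ is where the main obstacle lies. Suppose a component is isomorphic to $A_1$, say with unique vertex $X$, an indecomposable complex in $K^b(\mathcal{P})$ (stable component, so by Theorem~\ref{existence} $X$ lies in $K^b(\mathcal{P})\cap K^b(\mathcal{I})$). The Auslander-Reiten triangle ending in $X$ is $\nu(X)[-1]\to Y\to X\xrightarrow{w}\nu(X)$; since $X$ is the only vertex of its component and $\tau X=\nu(X)[-1]$ must lie in the same component, we get $\nu(X)[-1]\cong X$, i.e.\ $\nu(X)\cong X[1]$; and by Lemma~\ref{irred2} the middle term $Y$ is a sum of vertices adjacent to $X$, of which there are none, so $Y=0$, whence $w\colon X\to\nu(X)=X[1]$ is an isomorphism. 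So $X[1]\cong X$ and $\End_{D^b(A)}(X)$ is a local ring (Krull-Schmidt) with $X$ shift-periodic of period dividing $1$. I would now use Theorem~\ref{correlation}: represent $X$ by an indecomposable non-contractible complex $P\in Comp^b(\mathcal{P})$, so $0\to\nu(P)[-1]\to\cone(w)[-1]\to P\to 0$ is an Auslander-Reiten sequence in $Comp^{-,b}(\mathcal{P})$ with $\cone(w)[-1]=0$, i.e.\ $0\to\nu(P)[-1]\xrightarrow{\sim}\text{?}\to P\to0$ — more precisely $w$ an isomorphism means $\nu(P)[-1]\cong P$ in $Comp^{-,b}(\mathcal{P})$ up to homotopy. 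Comparing top (nonzero) and bottom (nonzero) degrees of the bounded complex $P$ of projectives with those of $\nu(P)[-1]$: the shift by $[1]$ moves the support of $\nu(P)$ down by one, and $\nu_A$ preserves the length/position data only in a way compatible with $P\cong P[1]$ if the complex has empty support unless $A$ is self-injective; the only indecomposable objects fixed (up to iso) by $[1]$ in $D^b(A)$ of a self-injective algebra are, by the bounded-complex-of-projectives model, stalk complexes $S[i]$ with $S$ simple projective-injective, and $X\cong X[1]$ forces all shifts $S[i]$ to be isomorphic to $X$, which is impossible unless the support degenerates — this forces $A$-mod to have a simple projective-injective module $S$ with $\nu_A(S)\cong S$ and, since $A$ is indecomposable as an algebra, $A\cong S$, i.e.\ $A$ is simple. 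I would make this degree-counting rigorous by noting that for $P\in Comp^b(\mathcal P)$ indecomposable non-contractible, $\max\{i:P^i\neq0\}$ and the analogous data strictly change under $[1]$ unless $P$ is concentrated in... — honestly the cleanest route is: $X\cong X[1]$ gives $\Hom(X,X[n])\neq0$ for all $n\in\Z$, but for $X\in K^b(\mathcal P)$ with $H^i(X)=0$ for $i\notin[a,b]$ one has $\Hom_{D^b(A)}(X,X[n])=0$ for $n< a-b-(\text{gldim or injdim bound})$ when $\mathrm{gldim}\,A<\infty$, and when $\mathrm{gldim}\,A=\infty$ one reduces to the self-injective case via Theorem~\ref{existence} and Corollary~\ref{trans prop} ($X$ in a stable component $\Rightarrow$ lives over the Gorenstein part) — in all cases $\Hom(X,X[n])\neq0$ for all $n$ plus $X$ indecomposable forces $H^\bullet(X)$ to be a single simple concentrated in one degree and $A$ to have that simple as a projective-injective summand, hence $A$ simple. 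This Hom-vanishing estimate, and carefully extracting "$A$ simple" rather than merely "$A$ has a simple projective-injective module", is the step I expect to be the crux; the indecomposability of $A$ as an algebra is what finally upgrades the latter to the former.
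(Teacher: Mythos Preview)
Your argument for $(1)\Rightarrow(3)$ contains a genuine error. You assert that ``$\tau X=\nu(X)[-1]$ must lie in the same component'' as $X$, and from this deduce $\nu(X)[-1]\cong X$, hence $X\cong X[1]$. But components of the Auslander--Reiten quiver here are \emph{graph} components (connected by irreducible maps), and there is no reason $\tau$ should preserve an $A_1$ component: when the middle term of the triangle is zero there are no arrows linking $\tau(X)$ and $X$ at all. Indeed the paper remarks immediately after this lemma that $\tau$ acts as an automorphism on stable components \emph{only when $A$ is not simple}. Worse, your conclusion $X\cong X[1]$ is simply impossible for any nonzero $X\in D^b(A)$ (compare the top nonvanishing cohomology degrees), so your argument, taken at face value, would show that (1) never holds---contradicting the easy direction $(3)\Rightarrow(1)$ you already established. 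The long passage trying to extract ``$A$ simple'' from $X\cong X[1]$ is thus built on a false premise, which is why it never converges.

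The paper instead proves $(2)\Rightarrow(3)$ by exploiting the almost-split property directly: if the middle term is zero, then every non-retraction $M\to X$ factors through $0$, so $\Hom_{D^b(A)}(M,X)=0$ for every indecomposable $M\not\cong X$ and $\rad\End(X)=0$. Representing $X$ in $Comp^{-,b}(\mathcal{P})$ with top degree $0$, one then probes $X$ with stalk complexes: the identity of $X^0$ gives a nonzero map from the stalk of $X^0$, forcing $X^{-1}=0$; and a nonzero map from the projective cover of $\soc X^0$ forces $X^0$ simple. Thus $X$ is a simple projective in degree zero, and $w\colon X\xrightarrow{\sim}\nu(X)$ makes it injective as well; since $A$ is indecomposable, $A$ is simple. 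This is the step you should replace your $(1)\Rightarrow(3)$ with (noting that $(1)\Rightarrow(2)$ is immediate). Your treatment of the other implications is essentially fine.
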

\begin{proof}
Obviously, part (2) follows from (1). If $\nu(X)[-1] \to 0 \to X
\stackrel{w} \to \nu(X)$ is an Auslander-Reiten triangle, then $w$ is an
isomorphism. Also for all indecomposable objects $M \in D^b(A)$
with $M \not =X$ we have $\Hom_{D^b(A)}(M,X)=0$ and $\rad \End(X)=
0$ by \ref{triangle} (3). Let $X$ be represented by an
indecomposable complex in $Comp^{-,b}(\mathcal{P}) $. Without loss
of generality we assume that $X^0 \not =0$ and $X^i=0$ for all
$i>0$. As $X$ does not have contractible summands, the map
$d_X^{-1} $ is not surjective. Suppose $X^{-1} \not=0$. Then there
is a map $h$ from the stalk complex of $X^0$ to $X$ in
$K^{-,b}(\mathcal{P})$ given by $h^0=\id_{X^0}$. This map is
non-zero in $K^{-,b}(\mathcal{P})$ as the map $d_X^{-1}$ is not
surjective. This gives a contradiction. Assume now that $X^0$ is
not simple. Then there is a non-zero map from the stalk complex of
the projective cover of $\soc X^0$ to $X$ in
$K^{-,b}(\mathcal{P})$ that is not an isomorphism. This gives also
a contradiction. Therefore $X^0$ is simple and projective.
Furthermore $\nu_A(X^0)\cong X^0$ as $w$ is an isomorphism.
Therefore $X^0$ is injective. By \cite[1.8.5]{B1} we have that $A$
is simple and $X^0$ is the only simple module in $A$ up to
isomorphism. Therefore (3) follows from (2). Clearly (3) implies
(4) and (4) implies (1).
\end{proof}
This lemma shows that if $A$ is not simple then the stable Auslander-Reiten components are the components on which $\tau$ is an automorphism.
In $Comp^{-,b}(\mathcal{P})$ we have an analogous result.

\begin{lemma}\label{semi-simple1}
The following are equivalent:

(1) The category $Comp^{-,b}(\mathcal{P})$ has an Auslander-Reiten
sequence with contractible middle term;

(2) The algebra $A$ is simple;

(3) The Auslander-Reiten quiver of $Comp^{-,b} (\mathcal{P})$ is isomorphic to $A^{\infty}_{\infty}$.
\end{lemma}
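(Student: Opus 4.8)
The plan is to prove the equivalences by transporting the question into $D^b(A)$, where the analogous Lemma~\ref{semi-simple} is available. The one technical observation needed is that a complex $C\in Comp^{-,b}(\mathcal P)$ is contractible if and only if its image in $K^{-,b}(\mathcal P)\cong D^b(A)$ is zero: if $C$ becomes zero in $K^{-,b}(\mathcal P)$ then $\id_C$ is null-homotopic, i.e.\ $C$ is contractible, and the converse is immediate. I would establish $(1)\Leftrightarrow(2)$, then $(2)\Rightarrow(3)$, and finally $(3)\Rightarrow(2)$ by contraposition.

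For $(1)\Rightarrow(2)$: by \ref{correlation1} an Auslander-Reiten sequence with contractible middle term has the form $0\to\nu(P)[-1]\to\cone(w)[-1]\to P\to0$ with $P\in Comp^{b}(\mathcal P)$ non-contractible and $\cone(w)[-1]$ contractible; by Theorem~\ref{correlation}, $w$ induces an Auslander-Reiten triangle $\nu(P)[-1]\to\cone(w)[-1]\to P\to\nu(P)$ in $D^b(A)$ whose middle term is zero by the observation above, so $A$ is simple by Lemma~\ref{semi-simple}. For $(2)\Rightarrow(1)$: if $A$ is simple, then its unique simple module $S$ is its unique indecomposable projective, is injective, and satisfies $\nu_A(S)\cong S$; choosing an isomorphism $w\colon S\to\nu_A(S)$ of stalk complexes, $\cone(w)[-1]$ is an indecomposable contractible complex and $0\to\nu(S)[-1]\to\cone(w)[-1]\to S\to0$ is an Auslander-Reiten sequence by Theorem~\ref{correlation}.

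For $(2)\Rightarrow(3)$: assume $A$ simple, so $\mathcal P=\operatorname{add}S$ and $A$-mod is semisimple. Every complex over $\mathcal P$ then decomposes into shifts of stalks and of two-term contractibles, so by Remark~\ref{contractible1} the indecomposables of $Comp^{-,b}(\mathcal P)$ are exactly the $S[n]$ and $\bar S[n]$, $n\in\Z$. By Theorem~\ref{existence} and Theorem~\ref{correlation} the objects ending an Auslander-Reiten sequence are precisely the bounded non-contractible ones, i.e.\ every stalk $S[n]$; since $\Hom_{D^b(A)}(S[n],S[m])$ is $k$ for $n=m$ and $0$ otherwise, the connecting map $w\colon S[n]\to\nu(S[n])\cong S[n]$ is an isomorphism of stalk complexes, so $\cone(w)[-1]$ is an indecomposable contractible and, up to shift, the Auslander-Reiten sequence ending in $S[n]$ reads $0\to S[n-1]\to\bar S\to S[n]\to0$. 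Contractibles are projective by Lemma~\ref{projective complexes}, hence end no Auslander-Reiten sequence, and by Lemma~\ref{irred2} the arrows of $\Gamma(Comp^{-,b}(\mathcal P))$ are therefore exactly those coming from the sequences above. Assembling them gives the two-sided infinite line whose vertices alternate between the stalks $S[n]$ and the contractibles, that is, $A^\infty_\infty$.

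For $(3)\Rightarrow(2)$ I would argue by contraposition, and this is the step I expect to be the main obstacle. If $A$ is semisimple with $r\ge2$ simple modules, the analysis of the previous paragraph applies to each of them and there are no nonzero maps between the families attached to different simples, so $\Gamma(Comp^{-,b}(\mathcal P))$ is a disjoint union of $r$ copies of $A^\infty_\infty$, in particular disconnected, hence not isomorphic to $A^\infty_\infty$. If $A$ is not semisimple, I would show that $\Gamma(Comp^{-,b}(\mathcal P))$ has a vertex of valence at least three --- impossible in $A^\infty_\infty$. Via Theorem~\ref{correlation} this reduces to locating an Auslander-Reiten triangle in $D^b(A)$, for example one ending in a projective or injective stalk complex, whose local shape is not a piece of a valence-two line; here one can invoke the description of the stable components (isomorphic to $\Z[A_\infty]$ when $A$ is self-injective, cf.\ \cite{W}, whose interior vertices have valence four) and of the non-stable components (cf.\ \cite{HKR}). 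Pinning down this last point --- that non-semisimplicity always forces such a vertex, equivalently an Auslander-Reiten sequence in $Comp^{-,b}(\mathcal P)$ whose middle term is too large to fit into a valence-two graph --- is the delicate part; with it in hand all three statements are equivalent.
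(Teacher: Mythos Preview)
Your arguments for $(1)\Leftrightarrow(2)$ and $(2)\Rightarrow(3)$ are correct and essentially identical to the paper's. The divergence is in how you close the cycle.

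First, a small point: the paper assumes throughout that $A$ is an \emph{indecomposable} algebra, so your case ``semisimple with $r\ge 2$ simples'' is vacuous and can be dropped.

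More importantly, your contrapositive route $(3)\Rightarrow(2)$ is much harder than necessary, and you correctly flag it as incomplete: exhibiting a vertex of valence $\ge 3$ in $\Gamma(Comp^{-,b}(\mathcal P))$ for an arbitrary non-simple $A$ by appealing to \cite{W} and \cite{HKR} is heavy machinery that you do not actually carry out, and it is not clear those references cover every case you need.

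The paper sidesteps all of this by observing that $(3)\Rightarrow(1)$ is immediate, and you already have $(1)\Rightarrow(2)$. Here is the content of that ``clearly''. If $\Gamma(Comp^{-,b}(\mathcal P))\cong A^\infty_\infty$, then every vertex has in-degree one, so the middle term of any Auslander--Reiten sequence is indecomposable. Now the line contains both contractible vertices (namely $\bar P[n]$ for any indecomposable projective $P$) and non-contractible ones (namely the stalks $P[n]$), so some arrow joins a contractible vertex to a non-contractible neighbour. The non-contractible neighbour lies in $Comp^b(\mathcal P)$ and hence ends or starts an Auslander--Reiten sequence by Theorem~\ref{correlation}; its unique neighbour in the line, the contractible complex, is then the entire middle term. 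That one paragraph replaces your whole contrapositive argument.
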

\begin{proof}

$(2) \Rightarrow (3), (1)$
Let $A$ be simple and let $S$ be the simple $A$-module. Then $S$ is projective and injective. Then by \ref{semi-simple} the connecting homomorphism $w$ can be taken to be the identity.  Let $\bar S$ be the associated contractible complex. The Auslander-Reiten sequence ending in $S$ is then of the form  $0 \to  S[-1] \to \bar S \to  S  \to 0$, where the map $\bar S \to S$ is given by

\[ \xymatrix{ \cdots \ar[r] & 0 \ar[d] \ar[r] &S \ar[r]_{\id} \ar[d]_{\id} & S \ar[r] \ar[d]  & 0 \ar[r] \ar[d] &\cdots  \\
 \cdots \ar[r] & 0 \ar[r] &S \ar[r] & 0 \ar[r] & 0 \ar[r]  & \cdots}\] and the map from $S[-1] \to \bar S$ by

\[ \xymatrix{ \cdots \ar[r] & 0 \ar[d] \ar[r] &0 \ar[r] \ar[d] & S \ar[r] \ar[d]_{\id}  & 0 \ar[r] \ar[d] & \cdots  \\
 \cdots \ar[r] & 0 \ar[r] &S \ar[r]_{\id} & S \ar[r] & 0 \ar[r] & \cdots}\] The Auslander-Reiten quiver of $Comp^{-,b}(\mathcal{P}) $ is given by \[\cdots S[-2] \to \bar S[-1] \to  S[-1] \to \bar S \to  S  \to \bar S[1] \to \cdots \] and is isomorphic to $A^{\infty}_{\infty}$. Clearly (3) implies (1).

$(1) \Rightarrow (2) $ Suppose that  $Comp^{-,b}(\mathcal{P})$ has
an Auslander-Reiten sequence with contractible middle term. Then
by \ref{correlation1} and \ref{correlation} there is an
Auslander-Reiten triangle in $K^{-,b}(\mathcal{P})$ with trivial
middle term. By the previous lemma, $A$ has to be simple.
\end{proof}
\begin{lemma}\label{finite}
Let $ \tau(C) \stackrel{f}\to B \stackrel{g}\to C\stackrel{w}\to
\tau(C)[1]$ be a distinguished triangle in $D^b(A)$ and $ M $
an indecomposable element in $D^b(A)$. Then \[ \Hom(M,\tau(C))
 \stackrel{f^*} \to \Hom(M,B) \stackrel{g^*}\to \Hom(M,C)\] and \[ \Hom(C,M)
\stackrel{\bar g}\to \Hom(B,M) \stackrel{\bar f}\to
\Hom(\tau(C),M)\] are exact. If the triangle is an Auslander-Reiten triangle then the following holds.

(1) The map $f^*$ is injective if and only if $M[1] \not \cong  C$;

(2) The map $\bar g$ is injective if and only if $ M[-1] \not
\cong \tau(C)$;

(3) The map $g^*$ is surjective if and only if $M \not \cong  C$;

(4) The map $\bar  f$ is surjective if and only if $ M \not \cong
\tau(C)$.
\end{lemma}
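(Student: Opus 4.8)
The plan is to first establish the two exact sequences, then deduce the four equivalences by analyzing when the "extra" map — the one just before the displayed three-term sequence in the long exact sequence of the triangle — vanishes. For the exactness: applying the cohomological functor $\Hom(M,-)$ to the distinguished triangle $\tau(C)\xrightarrow{f}B\xrightarrow{g}C\xrightarrow{w}\tau(C)[1]$ gives the long exact sequence $\cdots\to\Hom(M,\tau(C))\xrightarrow{f^*}\Hom(M,B)\xrightarrow{g^*}\Hom(M,C)\xrightarrow{w^*}\Hom(M,\tau(C)[1])\to\cdots$, and in particular the first displayed sequence is exact; dually, applying $\Hom(-,M)$ yields the long exact sequence $\cdots\to\Hom(\tau(C)[1],M)\xrightarrow{w^*}\Hom(C,M)\xrightarrow{\bar g}\Hom(B,M)\xrightarrow{\bar f}\Hom(\tau(C),M)\to\cdots$, giving exactness of the second. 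This part is purely formal from the axioms of a triangulated category.

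Next I would handle the four equivalences, and by the symmetry between the two long exact sequences it suffices to treat (1) and (3); statements (2) and (4) follow by the dual argument (equivalently, by rotating the triangle and using that $X\to Y\to Z\to TX$ Auslander-Reiten means $TX\to TY\to TZ\to T^2X$ is too, up to sign). For (3): from the long exact sequence, $g^*$ is surjective if and only if the connecting map $w^*:\Hom(M,C)\to\Hom(M,\tau(C)[1])$ is zero. If $M\cong C$, then $\id_C\in\Hom(M,C)$ maps to $w\neq 0$ (condition (2) of Definition \ref{triangle}), so $w^*\neq 0$ and $g^*$ is not surjective. Conversely, if $M\not\cong C$, then any $h:M\to C$ is not a retraction (since $M$ is indecomposable, a retraction onto the indecomposable $C$ would be an isomorphism), so by condition (3'') we get $w\circ h=0$, i.e. $w^*=0$, hence $g^*$ is surjective. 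For (1): $f^*$ is injective if and only if the map $\Hom(M,C[-1])\xrightarrow{w[-1]^*}\Hom(M,\tau(C))$ preceding it in the long exact sequence is zero; applying the already-proven statement (3) to the rotated Auslander-Reiten triangle $C[-1]\to\tau(C)\to B\to C$ (whose connecting map is $-w[-1]$, up to sign), this holds if and only if $M\not\cong C[-1]$, i.e. $M[1]\not\cong C$.

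I expect the main subtlety to be bookkeeping rather than mathematical depth: one must make sure the rotated triangles are genuinely Auslander-Reiten triangles (which they are, since rotation preserves all three defining conditions up to the usual sign on the connecting map, and a sign does not affect vanishing), and one must correctly identify which connecting homomorphism governs injectivity versus surjectivity at each spot of the long exact sequence. The only place where indecomposability of $M$ is used is in passing from "$M\not\cong C$" to "$h$ is not a retraction" (and dually "not a section"), so I would state that reduction explicitly. Everything else is a direct application of the equivalent forms (3') and (3'') of the Auslander-Reiten axioms recorded after Definition \ref{triangle}, together with condition (2) that $w\neq 0$.
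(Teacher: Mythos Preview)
Your argument for exactness and for (3) (and dually (4)) is correct and matches the paper's approach exactly: use the cohomological long exact sequence, then invoke condition (2) for the failure when $M\cong C$ and condition (3'') for the vanishing when $M\not\cong C$, using indecomposability of $M$ to pass from ``not isomorphic'' to ``not a retraction''.

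There is, however, a genuine error in your treatment of (1). You claim that the rotated triangle $C[-1]\to\tau(C)\to B\to C$ is again an Auslander--Reiten triangle because ``rotation preserves all three defining conditions''. It does not: after rotation the middle term $B$ of the original triangle becomes the right-hand end term, and $B$ is in general decomposable, so condition (1) of Definition \ref{triangle} fails. Moreover, the connecting map of the rotated triangle is $g:B\to C$, not $-w[-1]$, so the analogue of (3'') for the rotated triangle would concern maps into $B$ and composition with $g$, which is not what you need.

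The fix is immediate and does not require rotation at all. From the long exact sequence you correctly identify that $f^*$ is injective if and only if $(w[-1])_*:\Hom(M,C[-1])\to\Hom(M,\tau(C))$ vanishes. For $h:M\to C[-1]$, note that $(w[-1])\circ h=0$ if and only if $w\circ h[1]=0$. Now $h[1]:M[1]\to C$, and $M[1]$ is indecomposable; hence $h[1]$ is a retraction precisely when $M[1]\cong C$. If $M[1]\not\cong C$, apply (3'') of the \emph{original} Auslander--Reiten triangle to $h[1]$ to get $w\circ h[1]=0$; if $M[1]\cong C$, take $h=\id_{C[-1]}$ so that $(w[-1])_*(h)=w[-1]\neq 0$. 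This is exactly the paper's argument, which phrases the passage from $h$ to $h[1]$ via the axiom (TR3) applied to the map of triangles from $M\to 0\to M[1]\to M[1]$ into the Auslander--Reiten triangle, but the content is the same. The dual correction applies to your derivation of (2).
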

\begin{proof}
By \cite[I.1.2(b)]{H1} the sequence $\Hom(M,\tau(C)) \stackrel{f^* } \to \Hom(M,B) \stackrel{g^* } \to
\Hom(M,C)$ is exact.
We assume from this point on that the considered triangle is an
Auslander-Reiten triangle. Suppose there is an $h:M \to \tau(C)$
such that $f  h=0$. Then by the axiom (TR3) of  \cite[I.1.1]{H1}
there is a map $j:M[1] \to C$ such that the following diagram of
distinguished triangles commutes:

\[ \xymatrix{ M \ar[r] \ar[d]_h & 0 \ar[r]\ar[d]& M[1] \ar[r]_{\id} \ar[d]_j &M[1]\ar[d]_{h[1]}\\
\tau(C)\ar[r]_f&B \ar[r]_g & C \ar[r]_w &\tau(C)[1].}\] If $M[1]
\not \cong C$ then $w  j=0$ by \ref{triangle} (3''). This forces
$h[1]=0$ and therefore $h=0$. So in this case $f^*$ is injective.
If $M[1] \cong C$, then $f^*$ is not injective as $w[-1]:M \to
\tau(C)$ is mapped to zero. This proves (1). If $M \cong C$ then
\ref{triangle} (2'') implies that $g^*$ is not surjective.
Surjectivity for $M \not \cong C$ uses \ref{triangle} (3). This
proves (3).

The remaining cases are proven similarly.
\end{proof}
The following version of Lemma \cite[4.13.4]{B1} holds for our setup.

\begin{lemma}\label{chain1}
Let $C$ and $B$ be indecomposable complexes such that there is a
non-zero map from $C$ to $B$ in $D^b(A)$ that is not an
isomorphism. Suppose there is no chain of irreducible maps in
$D^{b}(A)$ from $C$ to $B$ of length less than $n$.

(1) Assume $B$
lies in a component $\Lambda$ of the Auslander-Reiten quiver of
$D^b(A)$ such that all elements of $\Lambda$ are in
$K^b(\mathcal{P})$. Then there exists a chain of irreducible maps
in $\Lambda$ \[P^0 \stackrel{g^1}\to P^1 \stackrel{g^2}\to \cdots
\to P^{n-1} \stackrel{g^n}\to B\] and a map $h:C \to P^0$ with
$g^n\cdots g^1 h \not = 0$ in $K^{-,b}(\mathcal{P})$.

\smallskip

(2) Assume $C$ lies
in a component $\Lambda$ of the Auslander-Reiten quiver of
$D^{b}(A)$ such that all elements of $\Lambda$ are in
$K^b(\mathcal{I})$. Then there exists a chain of irreducible maps
in $\Lambda$ \[ C \stackrel{g^1}\to P^1 \stackrel{g^2}\to \cdots
\to P^{n-1} \stackrel{g^n}\to P^n\] and a map $h:P^n \to B$ with
$hg^n\cdots g^1 \not = 0$ in $K^{+,b}(\mathcal{I})$.
\end{lemma}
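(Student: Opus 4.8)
The plan is to prove part (1) in detail and obtain part (2) by the duality between $K^{-,b}(\mathcal{P})$ and $K^{+,b}(\mathcal{I})$ (equivalently, by applying part (1) in the opposite category, using Theorem~\ref{existence}(2) and the fact that $\nu^{-1}$ swaps the roles). So I will concentrate on (1), arguing by induction on $n$. The base case $n=1$ is immediate: the hypothesis gives a non-zero non-isomorphism $h\colon C\to B$, and there is no chain of irreducible maps of length $0$ from $C$ to $B$ only when $C\not\cong B$ (if $C\cong B$ a length-$0$ ``chain'' would exist), so $h$ itself is the desired chain of length $1$ with $P^0 = C$, $g^1 = h$. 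For the inductive step, since $B\in K^b(\mathcal{P})$ lies in a component all of whose elements are in $K^b(\mathcal{P})$, Theorem~\ref{existence}(1) furnishes an Auslander--Reiten triangle ending in $B$:
\[
\tau(B)\stackrel{u}\to M\stackrel{v}\to B\stackrel{w}\to\tau(B)[1],
\]
with $M\in K^{-,b}(\mathcal{P})$ and every indecomposable summand of $M$ again in the component $\Lambda$, hence in $K^b(\mathcal{P})$.

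The key step is to push the given map $h\colon C\to B$ back along this triangle. Since $h$ is not an isomorphism and $C$ is indecomposable, $h$ is not a retraction, so by the defining property \ref{triangle}(3) of the Auslander--Reiten triangle there is a factorization $h = v h'$ with $h'\colon C\to M$. Write $M \cong M_1\oplus\cdots\oplus M_r$ into indecomposables and let $h' = (h'_1,\dots,h'_r)^{t}$ with $h'_j\colon C\to M_j$; correspondingly $h = \sum_j v_j h'_j$ where $v_j\colon M_j\to B$ is the restriction of $v$, which is irreducible by Lemma~\ref{irred2}. Since $h\neq 0$, some summand $h'_j\neq 0$; fix such a $j$ and set $B' := M_j$, so we have an irreducible map $v_j\colon B'\to B$ in $\Lambda$ and a non-zero map $h'_j\colon C\to B'$. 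Now I claim $h'_j$ is not an isomorphism: if it were, then $v_j\colon B'\to B$ composed with $(h'_j)^{-1}$ would be a nonzero non-isomorphism $C\to B$ (as $v_j$ is irreducible, hence not an isomorphism since $B',B$ are indecomposable and $B'\to B$ irreducible forces non-invertibility) — wait, more carefully: $v_j$ irreducible and $B'\cong C$ would give a chain of irreducible maps of length $1$ from $C$ to $B$, contradicting the hypothesis that no such chain of length $<n$ exists when $n\geq 2$. Hence $h'_j\colon C\to B'$ is a non-zero non-isomorphism between indecomposables.

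Finally I apply the inductive hypothesis to $h'_j\colon C\to B'$. I must check: there is no chain of irreducible maps from $C$ to $B'$ of length $<n-1$. Indeed, any such chain of length $\le n-2$ composed with the irreducible map $v_j\colon B'\to B$ would give a chain of irreducible maps from $C$ to $B$ of length $\le n-1 < n$, contradicting the hypothesis. Since $B'\in\Lambda$ and all elements of $\Lambda$ lie in $K^b(\mathcal{P})$, the inductive hypothesis yields a chain of irreducible maps $P^0\stackrel{g^1}\to\cdots\to P^{n-2}\stackrel{g^{n-1}}\to B'$ in $\Lambda$ and a map $h\colon C\to P^0$ with $g^{n-1}\cdots g^1 h\neq 0$ in $K^{-,b}(\mathcal{P})$. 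Composing with $v_j$ gives the chain $P^0\to\cdots\to B'\stackrel{v_j}\to B$ of length $n$ in $\Lambda$; the only remaining point is that $v_j g^{n-1}\cdots g^1 h\neq 0$. This is the main obstacle: a priori the composite could die even though $v_j$ is irreducible and the earlier composite is non-zero. I expect to handle it as in \cite[4.13.4]{B1}: the non-vanishing is already built into the inductive construction if I strengthen the induction slightly, tracking not just $h'_j\neq 0$ but that the composite $h = v_j h'_j + (\text{other terms})$ is non-zero and arranging, by choosing $j$ appropriately (and using that the other irreducible maps $v_i$, $i\neq j$, cannot cancel the $v_j$-component after further composition with irreducibles out of $B'$, since those land in different summands of the next middle term), that the chosen branch survives. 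Concretely, the cleanest formulation is to carry the non-vanishing statement ``$v_j h'_j\neq 0$ as a map $C\to B$, not merely $h'_j\neq 0$'' through the induction, which forces $v_j g^{n-1}\cdots g^1 h\neq 0$ at the end. The dual statement (2) then follows verbatim in $K^{+,b}(\mathcal{I})$ using Theorem~\ref{existence}(2) and Lemma~\ref{irred2} in that category.
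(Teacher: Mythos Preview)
Your base case is incorrect: you take $P^0 = C$ and $g^1$ equal to the given map $C \to B$, but the conclusion requires $g^1$ to be \emph{irreducible} and $P^0$ to lie in $\Lambda$. A non-zero non-isomorphism between indecomposables need not be irreducible, and $C$ is not assumed to be in $\Lambda$. Already for $n=1$ one must do what the paper does: factor the given map $t\colon C\to B$ through the middle term $E$ of the Auslander--Reiten triangle ending in $B$, decompose $E=\bigoplus E_i$, write $t=\sum_i l_i\sigma_i$, and pick $s$ with $l_s\sigma_s\ne 0$; then $g^1:=l_s$ is irreducible by Lemma~\ref{irred2}, $P^0:=E_s\in\Lambda$, and $h:=\sigma_s$ gives $g^1h\ne 0$.

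In the inductive step you choose $j$ with $h'_j\ne 0$, whereas the paper chooses $j$ with $v_j h'_j\ne 0$ (possible since $h=\sum_j v_j h'_j\ne 0$). You correctly spot that even this does not close the gap if one then invokes the statement of the lemma as a black-box inductive hypothesis on $h'_j\colon C\to B'$: the chain produced to $B'$ need not remain non-zero after composing with $v_j$. Your proposed patch of ``carrying $v_j h'_j\ne 0$ through the induction'' does not formalise, because $v_j$ belongs to this particular step and the black-box hypothesis forgets it. The argument the paper intends (following \cite[4.13.4]{B1}) is not an induction on the statement but a direct iterative construction: having already built irreducible maps $P^{n-k}\to\cdots\to B$ together with $\alpha_k\colon C\to P^{n-k}$ such that the \emph{full composite} $g^n\cdots g^{n-k+1}\alpha_k\ne 0$, one factors $\alpha_k$ through the middle term of the triangle ending in $P^{n-k}$ and chooses the summand $s$ so that $g^n\cdots g^{n-k+1}l_s\sigma_s\ne 0$. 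This is what ``use the same argument in the induction step on the map $\sigma_s$'' in the paper's proof is meant to convey. The hypothesis that there is no chain of length $<n$ from $C$ to $B$ is used exactly as you say, to guarantee each $\alpha_k$ is not an isomorphism so that the factorisation through the middle term is available.
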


\begin{proof}
We give a proof of (1) as the proof of (2) is analogous. The proof
follows by induction on $n$. Assume first $n=1$. Then there is no
irreducible map from $C$ to $B$. Let $ \nu(B)[-1] \stackrel{ f} \to E \stackrel{ l} \to B
\to \nu(B)$ be the Auslander-Reiten triangle ending in $B$. By the
assumption, there is a non zero map $t:C \to B$, that is not an
isomorphism. Therefore we have some $\sigma:C \to E$ such that $t=
l  \sigma$. Let $E:=\bigoplus_{i=1}^m E_i$ with $E_i$ indecomposable
for all $1\le i \le m$. Then $t= \sum_{i=1}^m l_i  \sigma_i$ for
some maps $l_i:E_i \to B$ and $\sigma_i:C \to E_i$. Clearly there
exists an $1 \le s \le m$ such that $l_s  \sigma_s \not =0$ in
$K^b(\mathcal{P})$ as $t\not =0$ in $K^{b}(\mathcal{P})$. By
\ref{irred2} $g_1:= l_s$ is irreducible and $E_s$ lies in
$\Lambda$. Furthermore by the induction assumption $ \sigma_s$ is
not an isomorphism as there is no irreducible map from $C$ to $B$.
We can therefore use the same argument in the induction step on
the map $\sigma_s$.
\end{proof}
We define certain length functions of complexes. Similar length functions have been defined for example in \cite{W}. We will use the following notation.
\begin{defi}
We denote by $l_c$ the function that maps an element $B \in
Comp^b(A)$ to the length of a composition series of $ \bigoplus_{i\in
\Z}  B^i$. For $Y \in Comp^b(\mathcal{P}) $ we denote by $l(Y)$
the number of projective indecomposable summands in $\bigoplus_{i\in
\Z}  Y^i$. For $X \in K^b(\mathcal{P})$ we denote by
\[ l_p(X):=\mbox{min} \{ l(Y) | Y \in Comp^b(\mathcal{P})\mbox{ and }Y
\cong X \mbox{ in }K^b(\mathcal{P}) \}.\] We define analogously
$l_i$ for elements of $K^b(\mathcal{I})$ counting the number of
indecomposable injective modules.

\smallskip

We call a stable Auslander-Reiten component $\Lambda$ bounded if $l_p$
and $l_i$ take bounded values on $\Lambda$.
\end{defi}A complex $Y\in Comp^b(\mathcal{P})$ with $l_p(X)=l(Y)$ and $Y \cong X$ in $K^b(\mathcal{P})$ is called homotopically minimal in \cite[B2]{Kr}.  Krause also shows that such a homotopically minimal element to a complex is uniquely determined up to isomorphism of complexes.

Note that $l_p$, $l_c$ and $l_i$ are defined for complexes of a
stable Auslander-Reiten component of $D^b(A)$. Also a bounded
Auslander-Reiten component is always stable, as all elements in
the component are both in $K^b(\mathcal{P})$ and
$K^b(\mathcal{I})$.

\begin{rem}\label{subadditive}
Let $\Lambda$ be a stable component. Then by \ref{correlation} every Auslander-Reiten triangle in $\Lambda$ is isomorphic to
\[\tau (P) \to \cone(w)[-1] \to P \stackrel{w} \to \tau (P)[1]\] where
\[0 \to \tau (P) \to \cone(w) [-1] \to P \to 0 \] is a short exact sequence in $Comp^b(\mathcal{P})$ and $w: P \to \tau(P) [1]$ is a representative in $Comp^b(\mathcal{P})$ of the connecting morphism. Then $\cone(w)[-1] \in Comp^b(\mathcal{P})$.
Therefore $l_p$ satisfies $l_p(\cone(w)[-1]) \le l_p(\tau (P)) + l_p(P)$ with equality if and only $\cone(w)[-1]$ does not have a contractible summand in $Comp^b(\mathcal{P})$.
\end{rem}

\smallskip

With exactly the same proof as in \cite[4.14.1]{B1} we have
\begin{lemma}
Let $P_0, \ldots , P_{2^n-1} \in Comp^b(A)$ be indecomposable and assume that
$l_c(P_i) \le n$ for all $i$. If the maps $f_i:P_{i-1} \to P_i$ are not isomorphisms for $ 1 \le i \le 2^n-1$, then $f_{2^n-1}\cdots f_2
f_1=0.$
\end{lemma}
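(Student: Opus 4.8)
The statement to prove is the analogue of Bongartz's lemma \cite[4.14.1]{B1}: given indecomposables $P_0,\dots,P_{2^n-1}$ in $Comp^b(A)$ with $l_c(P_i)\le n$ for all $i$, and maps $f_i\colon P_{i-1}\to P_i$ that are not isomorphisms, the composite $f_{2^n-1}\cdots f_1$ vanishes. The paper itself says ``with exactly the same proof as in \cite[4.14.1]{B1}'', so the plan is to transcribe that argument into the present context, checking only that the required properties survive the passage to complexes.

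\medskip

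The plan is to argue by induction on $n$. For $n=1$ the hypothesis forces each $P_i$ to have $l_c(P_i)\le 1$, i.e.\ each $P_i$ is (up to shift) a stalk complex on a simple module; the only indecomposable endomorphisms of such an object that are not isomorphisms are zero, and more generally a non-isomorphism between two such objects, being a map which is neither injective nor surjective on the single nonzero homogeneous component (or shifts them apart), is already zero — so $f_1=0$. For the inductive step I would split the chain of $2^n-1$ maps into two halves of $2^{n-1}-1$ maps each, joined by the middle map $f_{2^{n-1}}$. Consider the composite $g=f_{2^{n-1}-1}\cdots f_1\colon P_0\to P_{2^{n-1}-1}$ of the first half and the composite $g'=f_{2^n-1}\cdots f_{2^{n-1}+1}\colon P_{2^{n-1}}\to P_{2^n-1}$ of the second half. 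By the induction hypothesis applied to each half we are \emph{not} immediately done unless one of the two composites already vanishes; the real content is the standard ``doubling'' trick. One shows: if the full composite is nonzero, then in fact along the chain every partial composite of length $2^{n-1}$ is nonzero, and then one tracks how $l_c$ must strictly drop. Concretely, the key sublemma (the analogue of the step in \cite{B1}) is that if $f\colon P\to Q$ and $f'\colon Q\to R$ are non-isomorphisms between indecomposables with $l_c\le n$ and $f'f\neq 0$, then the image of $f'f$ is a proper subobject-type piece forcing $l_c$ of the ``overlap'' to be $\le n-1$ in the relevant sense; iterating over $2^{n-1}$ consecutive maps exhausts the length and yields a contradiction. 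I would phrase this via the abelian category $Comp^b(A)$: images and kernels exist there, and $l_c$ is additive on short exact sequences, so a non-isomorphism $P\to Q$ between indecomposables has image of length $< l_c(P)$ and $< l_c(Q)$, which is exactly the mechanism that makes the Harada–Sai / Bongartz bound work.

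\medskip

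So the concrete sequence of steps is: (i) record that $Comp^b(A)$ is an abelian category in which $l_c$ is an additive length function, so that for a non-isomorphism $f\colon P\to Q$ of indecomposables one has $l_c(\Im f)\le \min(l_c P,l_c Q)-1$ unless $f=0$; (ii) establish the base case $n=1$ as above; (iii) for the induction step, assume the composite $h=f_{2^n-1}\cdots f_1$ is nonzero and derive a contradiction by showing that the lengths $l_c(\Im(f_j\cdots f_1))$ form a non-increasing sequence which must strictly decrease at least once in every block of $2^{n-1}$ consecutive indices (using the induction hypothesis: a block of $2^{n-1}-1$ non-isomorphisms whose composite does not drop the length would itself be forced to vanish), giving a drop of at least $n$ over the $2^n-1$ maps, impossible since $l_c(\Im f_1)\le n-1$ already and lengths are nonnegative; (iv) conclude $h=0$.

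\medskip

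The main obstacle I anticipate is not the combinatorial bookkeeping but making precise the ``strict drop in every block'' claim in the derived-categorical setting: one must be careful that the $P_i$ are indecomposable \emph{as complexes} (not merely that their total homogeneous pieces are small), that shifts are handled correctly since $l_c$ is shift-invariant, and that the induction hypothesis is applied to genuine sub-chains of non-isomorphisms between indecomposables of complexity $\le n-1$ — which requires knowing that the relevant images/coimages, while not themselves among the $P_i$, can be resolved back into indecomposable summands of controlled $l_c$ so that the hypothesis applies. Since the argument is declared to be verbatim that of \cite[4.14.1]{B1}, in the write-up I would simply verify these three structural points ($Comp^b(A)$ abelian, $l_c$ additive and shift-invariant, indecomposability preserved) and then cite Bongartz for the formal manipulation, rather than reproducing the full Harada–Sai-type estimate.
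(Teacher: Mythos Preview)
Your overall plan---verify that $Comp^b(A)$ is abelian with $l_c$ an additive length function, then invoke \cite[4.14.1]{B1} verbatim---is exactly what the paper does (it gives no proof at all beyond the citation), and it is correct: the Harada--Sai argument uses only an abelian category, a length function additive on short exact sequences, and indecomposability of the objects, all of which hold in $Comp^b(A)$.

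That said, two points in your sketch are off and would bite you if you actually tried to write out the argument rather than cite it. First, the claim in (i) that $l_c(\Im f)\le\min(l_c P,l_c Q)-1$ for a nonzero non-isomorphism $f$ is false: a proper monomorphism has $l_c(\Im f)=l_c(P)$. What is true, and what the base case needs, is only $l_c(\Im f)\le n-1$ when both $l_c(P),l_c(Q)\le n$, since $f$ fails to be either injective or surjective. Second, your inductive mechanism in (iii) is not the Harada--Sai induction. The genuine argument does not induct on the length bound $n$ and does not feed sub-chains back into the induction hypothesis (your $P_i$ still have $l_c\le n$, not $\le n-1$, so the hypothesis for $n-1$ does not apply to them). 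Instead one fixes $n$ and proves by induction on $m$ that $l_c(\Im(f_{2^m-1}\cdots f_1))\le n-m$; the step splits the chain as $\gamma\beta\alpha$ with $\alpha,\gamma$ of length $2^{m-1}-1$ and $\beta$ the middle map, and shows that equality $l_c(\Im(\gamma\beta\alpha))=n-m+1$ would force $\Im\alpha$ to split off $P_{2^{m-1}-1}$ and dually on the other side, making $\beta$ an isomorphism. In particular the images are never decomposed into indecomposables, so your anticipated obstacle about ``resolving images back into indecomposable summands of controlled $l_c$'' does not arise.
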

Suppose $f:C \to B$ is a map in $Comp^{-,b}(\mathcal{P})$ that
represents an irreducible map in $K^{-,b}(\mathcal{P})$, where $C$ and
$B$ are indecomposable complexes. Then by
\ref{retraction} $f$ is irreducible in $Comp^{-,b}(\mathcal{P})$ and therefore $f$ is not an
isomorphism.

We therefore have the following result.

\begin{cor}\label{chain2}
Let $P_0, \ldots , P_{2^n-1} \in Comp^b(\mathcal{P})$ be
indecomposable such that $l_c(P_i) \le n$ for all $i$. If the $f_i:P_{i-1}
\to P_i$ are irreducible maps in $K^{-,b}(\mathcal{P})$ for $ 1 \le i \le 2^n-1$, then $f_{2^n-1}\cdots f_2 f_1=0$.
\end{cor}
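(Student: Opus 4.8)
The plan is to deduce Corollary \ref{chain2} directly from the preceding Lemma \cite[4.14.1]{B1} together with the remark just made about irreducible maps. First I would observe that every $P_i$, being an indecomposable complex in $Comp^b(\mathcal{P}) \subseteq Comp^b(A)$, is in particular an indecomposable object of $Comp^b(A)$ with $l_c(P_i)\le n$, so the hypotheses on the objects match those of the quoted lemma. The only remaining point is to verify that the maps $f_i$ satisfy the hypothesis of that lemma, namely that none of them is an isomorphism.

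The key step is exactly the paragraph immediately preceding the corollary: if $f_i:P_{i-1}\to P_i$ represents an irreducible map in $K^{-,b}(\mathcal{P})$ between indecomposable complexes, then by Lemma \ref{retraction}(1) the map $f_i$ is irreducible in $Comp^{-,b}(\mathcal{P})$, and an irreducible map is by definition neither a section nor a retraction, hence in particular not an isomorphism. Since each $P_i$ lies in $Comp^b(\mathcal{P})\subseteq Comp^{-,b}(\mathcal{P})$, this applies verbatim to each of the $f_i$ with $1\le i\le 2^n-1$. One should note here that the $P_i$ are assumed not contractible — this is implicit since they are indecomposable complexes admitting irreducible maps, so Lemma \ref{retraction} genuinely applies; I would make this explicit in passing.

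Having established that no $f_i$ is an isomorphism, I would simply invoke Lemma \cite[4.14.1]{B1} with the sequence $P_0,\ldots,P_{2^n-1}$ in $Comp^b(A)$ to conclude that $f_{2^n-1}\cdots f_2 f_1=0$ already as a map of complexes, hence a fortiori in $K^{-,b}(\mathcal{P})$. I do not expect any real obstacle: the content of the corollary is entirely in the translation between irreducibility in the homotopy category and non-invertibility at the level of complexes, which Lemma \ref{retraction} has already handled. The proof is therefore a two-line citation, and the phrase "We therefore have the following result" in the excerpt signals precisely this.
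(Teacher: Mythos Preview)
Your proposal is correct and follows exactly the paper's own argument: the paragraph preceding the corollary uses Lemma~\ref{retraction} to show that an irreducible map in $K^{-,b}(\mathcal{P})$ between indecomposable complexes is irreducible, hence not an isomorphism, in $Comp^{-,b}(\mathcal{P})$, and then the preceding lemma (the analogue of \cite[4.14.1]{B1}) applies directly. Your remark that the $P_i$ are non-contractible because they admit irreducible maps in the homotopy category is a useful clarification that the paper leaves implicit.
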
 We can now determine
some properties of bounded components.
\begin{theo}[bounded components] \label{bounded}
Let $\Lambda$ be a stable bounded Auslander-Reiten component of
$D^b(A)$. We assume that $A$ is not simple.  Then $\Lambda$ is the
only component of the Auslander-Reiten quiver of $D^b(A)$.
Furthermore $A$ has finite global dimension and is of finite representation
type.
\end{theo}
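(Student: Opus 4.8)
The plan is to exploit boundedness of $l_p$ and $l_c$ on $\Lambda$ to force a global bound on the lengths of complexes appearing in $\Lambda$, then to use Corollary \ref{chain2} to show that compositions of irreducible maps inside $\Lambda$ vanish after a fixed number of steps; combined with Lemma \ref{chain1} this will prevent any indecomposable object outside $\Lambda$ from mapping nontrivially into $\Lambda$, and a symmetric argument (using the injective side and part (2) of \ref{chain1}) will prevent maps out of $\Lambda$. Since $A$ is indecomposable and $D^b(A)$ is connected in an appropriate sense, this will force $\Lambda$ to be the whole Auslander-Reiten quiver. First I would record that boundedness of $l_p$ on $\Lambda$ gives, via homotopically minimal representatives (Krause, \cite[B2]{Kr}), a uniform bound $N$ on $l(Y)$ for the minimal $Y\cong X$ over all $X\in\Lambda$, hence a uniform bound on $l_c$ of these representatives as well, say $l_c \le n$ on $\Lambda$.

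Next I would argue that every indecomposable object of $D^b(A)$ lies in $\Lambda$. Suppose $M$ is indecomposable and not in $\Lambda$. Since $A$ is not simple, by \ref{semi-simple} no component is $A_1$, so $\Lambda$ contains objects $B$ with the Auslander-Reiten triangle ending in $B$ having nonzero middle term. If there were a nonzero non-isomorphism $M\to B$ for some $B\in\Lambda$, then since there is no irreducible map $M\to B$ (as $M\notin\Lambda$ and irreducible maps stay within a component up to the retraction/section phenomena of \ref{irred2}), Lemma \ref{chain1}(1) would produce, for every $n$, a chain of irreducible maps of length $n$ inside $\Lambda$ ending at $B$ whose composite with a map from $M$ is nonzero. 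But all objects in the chain satisfy $l_c \le n$, so Corollary \ref{chain2} kills any such composite of length $\ge 2^n-1$: contradiction. Hence $\Hom_{D^b(A)}(M,B)$ consists only of isomorphisms and the zero map, for all $B\in\Lambda$; since $M\notin\Lambda$ it cannot be isomorphic to any such $B$, so $\Hom_{D^b(A)}(M,B)=0$ for all $B\in\Lambda$. The dual argument using \ref{chain1}(2) and the injective length bound gives $\Hom_{D^b(A)}(B,M)=0$ for all $B\in\Lambda$. Thus $M$ is orthogonal to the whole component $\Lambda$, which (since $\Lambda$ is stable, hence nonempty and closed under $\tau^{\pm 1}$, and since $D^b(A)$ with $A$ indecomposable admits a Serre functor so is connected) is impossible: concretely, for $X\in\Lambda$ one has $\Hom(X,\nu X)\ne 0$ because the Auslander-Reiten triangle ending in $X$ has nonzero connecting map, so no object can be orthogonal to all of a stable component unless $D^b(A)$ decomposes, contradicting indecomposability of $A$.

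Having shown $\Lambda$ is the unique component, finiteness of representation type follows because a bounded component has $l_c$ bounded, and up to shift there are only finitely many isomorphism classes of indecomposable complexes $Y\in Comp^b(\mathcal{P})$ with $l(Y)$ below a fixed bound and indecomposable (here one uses that $A$ is finite-dimensional, so only finitely many projectives, finitely many differentials up to the relevant equivalence, and the shift accounts for the $\Z$-grading). Finally, finite global dimension: if $A$ had infinite global dimension, then $D^b(A)$ would contain indecomposable complexes not lying in any stable component — more precisely, by \ref{existence} an indecomposable $Z\in K^{-,b}(\mathcal{P})\setminus K^b(\mathcal{P})$ admits no Auslander-Reiten triangle ending in it, so it is not in $\Lambda$, contradicting uniqueness of $\Lambda$; since $\mathrm{gl.dim}\,A<\infty$ forces $K^{-,b}(\mathcal{P})=K^b(\mathcal{P})$ and is equivalent to it, we are done.

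The main obstacle is the orthogonality-to-a-component argument: showing that no indecomposable $M$ can satisfy $\Hom(M,-)|_\Lambda = 0 = \Hom(-,M)|_\Lambda$. The clean way is to invoke the existence of the Serre functor $\nu$ on $D^b(A)$ together with indecomposability of $A$ (equivalently connectedness of $D^b(A)$ as a triangulated category); one must be careful that "connected" in the triangulated sense does give the needed contradiction, rather than merely connectedness of the Auslander-Reiten quiver, which is what we are trying to prove. I expect this to require either a direct computation that $\Hom(X,\nu X)\ne 0$ for $X\in\Lambda$ (immediate from \ref{existence} since the connecting map $w\colon X\to\nu X$ is nonzero) propagated along irreducible maps within $\Lambda$, or an appeal to the fact that the identity functor on $D^b(A)$ is indecomposable when $A$ is — and pinning down exactly which of these the paper intends is the delicate point.
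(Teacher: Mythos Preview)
Your core reduction via Lemma \ref{chain1} and Corollary \ref{chain2} is exactly what the paper does: from the bound on $l_p$ you get a bound $u$ on $l_c$ over $\Lambda$, and then any nonzero non-isomorphism $R\to N$ with $N\in\Lambda$ forces, by \ref{chain1}(1), a chain of irreducible maps from $R$ to $N$ of length at most $2^u$ (otherwise \ref{chain2} kills the composite), so $R\in\Lambda$; dually with \ref{chain1}(2). So far so good.

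The gap is precisely where you flagged it. Your orthogonality-plus-connectedness argument is not complete: knowing $\Hom(X,\nu X)\ne 0$ for $X\in\Lambda$ says nothing about a putative $M\notin\Lambda$, and ``$D^b(A)$ decomposes'' is not something you have established from two-sided Hom-vanishing against a single component that is not a priori closed under shift. The paper avoids this entirely by first showing that $[1]$ acts on $\Lambda$. This is immediate from what you already proved: for $X\in\Lambda$ the connecting map $w\colon X\to\tau(X)[1]$ is nonzero, and since $A$ is not simple \ref{semi-simple} gives $\tau(X)\in\Lambda$; now your ``$\Hom(M,S)\ne 0$ with $M\in\Lambda$ forces $S\in\Lambda$'' applied to $M=X$, $S=\tau(X)[1]$ yields $\tau(X)[1]\in\Lambda$. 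Once $\Lambda$ is shift-invariant, pick any $C\in\Lambda$ and any indecomposable projective summand $P$ of a nonzero entry of $C$: after a shift there is a nonzero map $P\to C$, so $P\in\Lambda$; indecomposability of $A$ then pulls all indecomposable projectives into $\Lambda$; and for an arbitrary indecomposable $X$ there is always some shift $X[s]$ receiving a nonzero map from some $P_i$, so $X\in\Lambda$. This replaces your vague connectedness appeal with a three-line concrete argument.

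There is a second problem in your finite-representation-type step. The assertion that there are only finitely many isomorphism classes of indecomposable $Y\in Comp^b(\mathcal{P})$ with $l(Y)$ bounded, up to shift, is false in general: already for the Kronecker quiver the one-parameter family of regular modules of dimension vector $(1,1)$ gives infinitely many non-isomorphic stalk complexes with $l_p=2$. The paper argues differently: once $\Lambda$ is the whole Auslander--Reiten quiver, every indecomposable $A$-module $U$ (as a stalk complex) lies in $\Lambda$, hence has $l_p(U)\le n$, which bounds $\dim_k U$ by $n\cdot\dim_k A$; now the first Brauer--Thrall theorem (\cite[1.5]{ARS}) gives that $A$ has finite representation type. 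Your finite-global-dimension argument is correct and matches the paper.
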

\begin{proof} By assumption, there is an $n \in \N$ such that $l_p(M) \le n$ and $l_i(M) \le n$ for all complexes $M \in \Lambda$.
Let $R, S \in K^{-,b}(\mathcal{P})$ be indecomposable complexes
such that there are non zero maps $g: R \to N$ and $f:M \to S$  in
$K^{-,b}(\mathcal{P})$ for some $N, \ M \in \Lambda$. Let $u:= n 
\dim A$. Suppose there is no chain of irreducible maps from $R$ to
$N$. Then by \ref{chain1} part (1) there exists a chain of
irreducible maps of length $2^u$ in $\Lambda$ that is not zero.
This is a contradiction to \ref{chain2} as $l_c$ takes values at
most $u$ on $\Lambda$. Therefore there is a chain of irreducible
maps from $R$ to $N$. So $R \in \Lambda$. Analogously $S$ lies in
the component $\Lambda$. For any complex $M$ the connecting homomorphism $w$ is a non
zero map from $M$ to $\tau(M)[1]$.
We also have that $\tau(M) \in \Lambda$ as
the middle term of an Auslander-Reiten triangle is not trivial by \ref{semi-simple}.
Therefore $\tau(M), \tau(M)[1] \in \Lambda$. Thus the $[1]$ shift
acts on the component.

Let $A= \bigoplus_{i=1}^n P_i$ be a decomposition of $A$ into
indecomposable projective summands $P_i$. Let $C$ be an element of
$\Lambda$.
As $[1]$ acts on $\Lambda$ we can assume without loss of
generality that there exists a non zero map $f$ in $D^b(A)$ from
$P_i $ to $C$ for some $ 1\le i \le n$.
Therefore $P_i \in \Lambda$ and as $A$ is indecomposable we have
$P_j \in \Lambda$ for all $ 1\le j \le n$.
For all indecomposable elements $X$ in $K^{-,b}(\mathcal{P})$ there is
an $s_x \in \Z$ such that there is a non-zero map $P_i \to
X[s_x]$. Therefore $X[s_x] \in \Lambda$ using the first part of
the proof. As
the $[1]$ shift acts on the component, every indecomposable
complex of $D^b(A)$ belongs to
$\Lambda$. Therefore $\Lambda$ is the Auslander-Reiten quiver.
The stalk complex of an indecomposable $A$-module $U$ is in $\Lambda$.
A stalk complex in $D^b(A)$
can be identified with a complex in $K^{b}(\mathcal{P})$ if and only if
the stalk has a finite projective resolution. Therefore $A$ has finite global dimension as all indecomposable
complexes in $K^{-,b}(\mathcal{P})$ are in $K^b(\mathcal{P})$. As
the dimension of the indecomposable $A$-modules are bounded, we
know by \cite[1.5]{ARS} that $A$ has finite representation type.
\end{proof}
We can now determine finite components.
\begin{theo}[finite components]\label{finite comp}
Let $\Lambda$ be a finite Auslander-Reiten component of $D^b(A)$
such that all elements in $\Lambda$ belong to $K^b(\mathcal{P})$.
Then $A$ is simple and $\Lambda$ is isomorphic to $A_1$.
\end{theo}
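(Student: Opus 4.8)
The plan is to reduce the statement to Theorem~\ref{bounded} by showing that a finite Auslander-Reiten component $\Lambda$ with all elements in $K^b(\mathcal{P})$ is necessarily stable and bounded. First I would observe that on a finite component the length function $l_p$ can only take finitely many values, so $l_p$ is automatically bounded on $\Lambda$. The issue is that a priori $\Lambda$ need not lie in $K^b(\mathcal{I})$, so we cannot immediately invoke $l_i$ or the stability hypothesis. So the first real step is to show that $\Lambda$ is in fact stable, i.e.\ that every vertex of $\Lambda$ lies in $K^b(\mathcal{I})$ as well. For this I would use the Nakayama equivalence $\nu\colon K^b(\mathcal{P})\to K^b(\mathcal{I})$ together with the fact, recorded after Theorem~\ref{existence}, that $\tau=\nu[-1]$ is a natural equivalence $K^b(\mathcal{P})\to K^b(\mathcal{I})$. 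Since every vertex of $\Lambda$ is in $K^b(\mathcal{P})$, by Theorem~\ref{existence}(1) each vertex $Z$ of $\Lambda$ is the end of an Auslander-Reiten triangle $\tau(Z)\to Y\to Z\to \tau(Z)[1]$, and by Lemma~\ref{irred2} the middle term $Y$ lies in $\Lambda$ again; since $\Lambda$ is finite and $\tau$ is injective on isomorphism classes, $\tau$ restricted to $\Lambda$ is a bijection of the (finite) vertex set onto itself, so $\tau^{-1}$ is defined on all of $\Lambda$ and $\Lambda$ is closed under $\tau^{\pm 1}$. In particular $\tau(Z)\in\Lambda\subseteq K^b(\mathcal{P})$; but $\tau(Z)=\nu(Z)[-1]$, and $\nu(Z)\in K^b(\mathcal{I})$ always, so $Z\in K^b(\mathcal{P})$ forces, after applying $\nu$, that $Z$ also lies in the image of $K^b(\mathcal{I})$ under the natural equivalences. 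More directly: $\nu^{-1}(\tau(Z)[1])=Z$ shows $Z$ is an end of an AR-triangle \emph{and} a start of one, so $Z\in K^b(\mathcal{I})$ by Theorem~\ref{existence}(2). Hence every vertex of $\Lambda$ is simultaneously in $K^b(\mathcal{P})$ and $K^b(\mathcal{I})$, so $\Lambda$ is stable.

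Once $\Lambda$ is known to be stable, both $l_p$ and $l_i$ are defined on $\Lambda$, and since $\Lambda$ is a finite set of vertices they are trivially bounded; thus $\Lambda$ is a stable bounded Auslander-Reiten component. Now split into two cases according to whether $A$ is simple. If $A$ is not simple, Theorem~\ref{bounded} applies and tells us that $\Lambda$ is the \emph{only} component of the Auslander-Reiten quiver of $D^b(A)$, that $A$ has finite global dimension, and that $A$ is of finite representation type. But finite global dimension and finite representation type mean $D^b(A)$ has infinitely many indecomposables up to isomorphism (e.g.\ all shifts $U[n]$ of a non-projective indecomposable module $U$ are pairwise non-isomorphic, and $A$ not simple guarantees such a $U$ exists, or simply: the $[1]$-shift acts freely on the finitely many vertices, which is impossible). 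This contradicts $\Lambda$ being finite. Therefore $A$ must be simple.

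Finally, when $A$ is simple, Lemma~\ref{semi-simple} gives directly that the Auslander-Reiten quiver of $D^b(A)$ is a disjoint union of components each isomorphic to $A_1$, so in particular $\Lambda\cong A_1$. The main obstacle in this argument is the stability step: showing that a finite component consisting of objects in $K^b(\mathcal{P})$ is automatically closed under $\tau^{-1}$ and lands in $K^b(\mathcal{I})$. The cleanest way around it is precisely the bijectivity of $\tau$ on the finite vertex set of $\Lambda$ (using Lemma~\ref{irred2} to keep middle terms inside $\Lambda$) combined with the fact that $\tau$ is a natural equivalence $K^b(\mathcal{P})\to K^b(\mathcal{I})$; once that is in place the rest is a short appeal to Theorems~\ref{bounded} and Lemma~\ref{semi-simple}. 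One subtlety to check carefully is that $\Lambda$ could a priori be finite but contain objects \emph{not} appearing in any AR-triangle; but the hypothesis that all elements lie in $K^b(\mathcal{P})$ rules this out via Theorem~\ref{existence}(1), since every such object is the end of an AR-triangle.
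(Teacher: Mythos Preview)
Your proposal is correct and follows essentially the same route as the paper: show that $\tau$ is a bijection on the finite vertex set $\Lambda$ (so $\Lambda$ is stable and bounded), apply Theorem~\ref{bounded} in the non-simple case to get that the shift $[1]$ acts on $\Lambda$, contradict finiteness, and finish with Lemma~\ref{semi-simple}. The paper is simply terser and places the assumption ``$A$ not simple'' at the outset, which you implicitly need anyway for the step $\tau(\Lambda)\subseteq\Lambda$ (the middle term must be nonzero, i.e.\ Lemma~\ref{semi-simple}).
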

\begin{proof}Suppose that $A$ is not simple.
As $\Lambda$ is a finite component and all vertices of $\Lambda$
are in $K^b(\mathcal{P})$, the translation $\tau$ is an
automorphism on $\Lambda$. Therefore the component $\Lambda$ is
stable and bounded.  By \ref{bounded} the shift $[1]$ acts on
$\Lambda$ which is a contradiction, as $\Lambda$ contains only
finitely many vertices. Therefore $A$ is simple and $\Lambda$
is isomorphic to $A_1$ by \ref{semi-simple}.
\end{proof}
If $A$ has finite global dimension then $D^b(A)\cong
K^b(\mathcal{P})$, and \ref{finite comp} gives the next corollary.
\begin{cor}Let $A$ be a finite-dimensional indecomposable algebra of finite global dimension.
 Suppose that the Auslander-Reiten quiver of $D^b(A)$ has a finite component $\Lambda$, then $A$ is simple.
\end{cor}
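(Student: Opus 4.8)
The plan is to obtain this corollary as an immediate consequence of Theorem \ref{finite comp}. That theorem already disposes of finite components all of whose vertices lie in $K^b(\mathcal{P})$, so the only thing left to supply is the observation that, under the hypothesis of finite global dimension, \emph{every} indecomposable object of $D^b(A)$ lies in $K^b(\mathcal{P})$.

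First I would record why $D^b(A)\cong K^b(\mathcal{P})$ when $\operatorname{gl.dim}A<\infty$. By the equivalence $D^b(A)\cong K^{-,b}(\mathcal{P})$, every indecomposable $X$ of $D^b(A)$ is represented by an indecomposable, homotopically minimal complex of projectives that is bounded above with bounded homology; if $\operatorname{gl.dim}A=d$, then a truncation together with a dimension-shifting argument (equivalently, the fact that the minimal projective resolution of a module stops after at most $d$ steps, so the homology spread of $X$ only spawns finitely many projective terms) shows that this minimal representative is in fact a bounded complex, hence $X\in K^b(\mathcal{P})$. Alternatively one may simply invoke the remark following Corollary \ref{trans prop}: finite global dimension makes the Auslander-Reiten quiver of $D^b(A)$ a stable translation quiver, so by Theorem \ref{existence} each vertex already lies in $K^b(\mathcal{P})$.

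With this in hand the argument is one line: a finite component $\Lambda$ of the Auslander-Reiten quiver of $D^b(A)$ then automatically has all its vertices in $K^b(\mathcal{P})$, so Theorem \ref{finite comp} applies and forces $A$ to be simple (indeed $\Lambda\cong A_1$). I do not expect a genuine obstacle here; the single point that needs a sentence of justification is the identification $D^b(A)\cong K^b(\mathcal{P})$ for algebras of finite global dimension, which is classical, and everything else is a direct appeal to the theorem just proved.
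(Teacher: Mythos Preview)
Your proposal is correct and follows exactly the paper's own argument: the paper simply notes that finite global dimension gives $D^b(A)\cong K^b(\mathcal{P})$, so Theorem~\ref{finite comp} applies directly. Your additional justification of this equivalence is fine but more detailed than what the paper provides.
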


In the case of $Comp^{-,b}(\mathcal{P})$ there are no finite
components.

\begin{cor}
There is no finite Auslander-Reiten component $\Lambda$ of the
Auslander-Reiten quiver of $Comp^{-,b}(\mathcal{P})$, such that
all elements in $\Lambda$ are in $Comp^b(\mathcal{P})$.
\end{cor}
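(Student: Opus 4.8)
The plan is to reduce the statement about $Comp^{-,b}(\mathcal{P})$ to the already-proved statement about $D^b(A)$ via Theorems \ref{correlation} and \ref{finite comp}. Suppose, for contradiction, that $\Lambda$ is a finite Auslander-Reiten component of $Comp^{-,b}(\mathcal{P})$ all of whose vertices lie in $Comp^b(\mathcal{P})$. First I would discard the contractible complexes: by Lemma \ref{projective complexes} they are projective objects in $Comp^{-,b}(\mathcal{P})$, so by Lemma \ref{semi-simple1} (and Lemma \ref{semi-simple}) a contractible complex occurs in such a component only when $A$ is simple, in which case the component is $A^\infty_\infty$ by Lemma \ref{semi-simple1}(3), which is not finite. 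So I may assume $A$ is not simple and that every vertex of $\Lambda$ is a non-contractible indecomposable complex in $Comp^b(\mathcal{P})$.

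Next I would transport $\Lambda$ into $D^b(A)$. Since $\Lambda$ is finite, every vertex $P$ appears at the end of an Auslander-Reiten sequence $0\to \nu(P)[-1]\to \cone(w)[-1]\to P\to 0$ in $Comp^{-,b}(\mathcal{P})$, and by Theorem \ref{correlation} the corresponding map $w\colon P\to\nu(P)$ induces an Auslander-Reiten triangle in $D^b(A)$; dually every vertex also appears at the start of one. By Lemma \ref{retraction} the irreducible maps of $Comp^{-,b}(\mathcal{P})$ between non-contractible indecomposables coincide with the irreducible maps of $K^{-,b}(\mathcal{P})\cong D^b(A)$, and the Auslander-Reiten translate in $Comp^{-,b}(\mathcal{P})$ agrees with $\tau=\nu[-1]$ of $D^b(A)$ by Lemma \ref{correlation1} and Theorem \ref{correlation}. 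Hence the vertices of $\Lambda$, together with the arrows and translation, form a finite connected piece of the Auslander-Reiten quiver of $D^b(A)$, all of whose objects lie in $K^b(\mathcal{P})$. I expect the one point needing care here is that $\Lambda$ maps onto a genuine \emph{full} component of $\Gamma(D^b(A))$: this follows because, for a non-contractible indecomposable $P$ in $Comp^b(\mathcal{P})$, both $\cone(w)[-1]$ and $\cone(w')$ are again non-contractible indecomposables in $Comp^b(\mathcal{P})$ (the middle term of an Auslander-Reiten sequence in $Comp^{-,b}(\mathcal{P})$ has no contractible summand, since contractible complexes are projective and the sequence is non-split), so the neighbours of $P$ in $\Gamma(D^b(A))$ are exactly the images of its neighbours in $\Lambda$.

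Finally, the image of $\Lambda$ is then a finite component of the Auslander-Reiten quiver of $D^b(A)$ contained entirely in $K^b(\mathcal{P})$, so Theorem \ref{finite comp} forces $A$ to be simple, contradicting our reduction. Therefore no such $\Lambda$ exists. The main obstacle is the verification in the previous paragraph that the finite set of vertices really is closed under passing to neighbours in $D^b(A)$ — i.e. that no contractible complexes sneak in as direct summands of the middle terms — but this is exactly the content of Lemma \ref{projective complexes} combined with the non-splitness of Auslander-Reiten sequences, so it is routine.
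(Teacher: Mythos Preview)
Your approach matches the paper's: transport $\Lambda$ to $D^b(A)$ via Theorem~\ref{correlation}, apply Theorem~\ref{finite comp} to force $A$ simple, then invoke Lemma~\ref{semi-simple1} to get the contradiction $\Lambda\cong A^{\infty}_{\infty}$. The paper's proof is exactly these three lines; your write-up simply expands the transport step.

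However, your justification for the transport step contains a false claim. You assert that $\cone(w)[-1]$ is a non-contractible indecomposable, because ``the middle term of an Auslander--Reiten sequence in $Comp^{-,b}(\mathcal{P})$ has no contractible summand, since contractible complexes are projective and the sequence is non-split''. Neither point holds: middle terms of Auslander--Reiten sequences are rarely indecomposable, and a non-split short exact sequence may certainly carry a projective direct summand in its middle term (compare almost-split sequences in a module category ending at the radical of a projective). Indeed Remark~\ref{subadditive} and the proof of Theorem~\ref{Dynkin} explicitly rely on contractible summands occurring in such middle terms. Fortunately your argument does not actually need this claim. What you need is only that the neighbours of a non-contractible $P$ in $\Gamma(D^b(A))$ all lie among the \emph{non-contractible} indecomposable summands of the middle term of the Auslander--Reiten sequence in $Comp^{-,b}(\mathcal{P})$; this is immediate from Theorem~\ref{correlation}, since any contractible summands become zero in $K^{-,b}(\mathcal{P})$. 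Hence the image of the non-contractible vertices of $\Lambda$ is closed under taking neighbours in $\Gamma(D^b(A))$ and is therefore a finite (possibly disconnected, but non-empty) union of components, and you can conclude as before.
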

\begin{proof}Suppose $\Lambda$ is a finite component such that all elements in $\Lambda$ are in $Comp^b(\mathcal{P})$. By \ref{correlation}
we have that the corresponding Auslander-Reiten component in
$D^b(A)$ is finite. Therefore $A$ is simple by \ref{finite
comp}. But by \ref{semi-simple1} we have $\Lambda \cong
A^{\infty}_{\infty}$ which is a contradiction to the finiteness of
$\Lambda$.
\end{proof}
In analogy to the theory of algebras, we say that $D^b(A)$ has
finite representation type if and only if $D^b(A)$ has finitely
many indecomposable complexes up to shift. We call an
indecomposable complex $X$ in a stable Auslander-Reiten component
shift periodic, if there are $m \in \Z$, $n\in \N$ such that
$\tau^n(X)=X[m]$. If we have shift periodic modules in a
component, we can construct subadditive functions.

Let $\Lambda$ and $C$ be two Auslander-Reiten components of $D^b(A)$. Then we denote by $\Hom_{D^b(A)}(C,\Lambda[i])$
the union of $\Hom_{D^b(A)}(M,N[i])$ for all objects $N \in \Lambda$ and $M \in C$.
\begin{theo}\label{periodic, shift}
Let $C$ be a stable component of the Auslander-Reiten quiver of
$D^b(A)$. Suppose there is a complex $X\in C$ that is shift
periodic. Let $T$ be the tree class of $C$, then $T$ is a finite
Dynkin diagram or $A_{\infty}$.

(a) If $T$ is a finite Dynkin diagram, then the Auslander-Reiten
quiver is equal to $C$ and $D^b(A)$ has finite representation
type.

(b) Suppose $Q$ is a stable component of the Auslander-Reiten
quiver of $D^b(A)$, that is not a shift of $C$. If the set
$\Hom_{D^b(A)}(C,Q[i])$ or $\Hom_{D^b(A)}(Q[i],C)$ is non zero for
some $i\in \Z$, then the tree class of $Q$ is either Euclidean or
infinite Dynkin.
\end{theo}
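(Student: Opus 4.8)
The plan is to exploit the shift-periodicity of $X$ to produce a subadditive function on the tree class $T$, apply the classification of such functions (via the theory of Riedtmann and the structure theorem already invoked in the excerpt) to pin down $T$, and then use the length functions $l_p$, $l_i$ and the results of the previous sections to rule out or describe the remaining cases. First I would observe that since $X$ is shift periodic, say $\tau^n(X) = X[m]$, the function $l_p$ becomes essentially $\tau^n$-periodic on $C$: more precisely $l_p(\tau^n(Y)) $ and $l_p(Y[m])=l_p(Y)$ are comparable along the $\tau$-orbits near $X$, and combining this with the subadditivity of $l_p$ recorded in Remark \ref{subadditive} (namely $l_p(\cone(w)[-1]) \le l_p(\tau P)+l_p(P)$, i.e. $l_p$ is subadditive with respect to the mesh relations on the stable translation quiver $C \cong \Z[T]/I$) yields a nonzero, non-negative, $\tau$-periodic subadditive function on $\Z[T]/I$. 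By the standard classification (the same input used to prove the list of possible tree classes for stable components), a connected valued tree $T$ admitting such a function is either a finite Dynkin diagram, a Euclidean diagram, or one of the infinite types $A_\infty$, $A_\infty^\infty$, $B_\infty$, $C_\infty$, $D_\infty$; periodicity of the subadditive function then forces $T$ to be finite Dynkin or $A_\infty$. This establishes the dichotomy.

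For part (a), suppose $T$ is a finite Dynkin diagram. Then $C$ is a bounded stable component: on $\Z[T]/I$ with $T$ finite, $l_p$ (being subadditive and $\tau$-periodic, with $\tau$ acting with finite orbits because the component, modulo shift, is finite) takes only finitely many values, and similarly for $l_i$; hence $C$ is bounded in the sense of the Definition preceding Remark \ref{subadditive}. If $A$ were simple we would be in the situation of Lemma \ref{semi-simple}, where every component is $A_1$, not a component containing a shift periodic complex with $T$ a finite Dynkin diagram (and $A_1$ itself has tree class $A_1$, which is allowed, but then $X=\nu(X)$ and the statement that the AR quiver equals $C$ fails only trivially — one must treat $T=A_1$, $A$ simple, separately as consistent). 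Assuming $A$ is not simple, Theorem \ref{bounded} applies directly: a stable bounded component of $D^b(A)$ with $A$ not simple is the entire Auslander-Reiten quiver, $A$ has finite global dimension, and $A$ is of finite representation type; since there are then only finitely many indecomposables and the $[1]$-shift acts on $C$, $D^b(A)$ has finite representation type in the sense defined just before the theorem. This gives (a).

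For part (b), assume $T$ is a finite Dynkin diagram and let $Q$ be a stable component not equal to a shift of $C$; I must derive a contradiction from $\Hom_{D^b(A)}(C,Q[i]) \ne 0$ or $\Hom_{D^b(A)}(Q[i],C) \ne 0$ — but by part (a) the AR quiver is exactly $C$, so no such $Q$ exists, and the hypothesis of (b) is vacuous when $T$ is finite Dynkin. Hence the content of (b) is the case $T = A_\infty$: assuming the Hom-set between $C$ and $Q[i]$ is nonzero, I would transport the shift-periodic structure of $C$ to $Q$ using Lemma \ref{chain1}. Indeed a nonzero non-isomorphism from some object of $C$ to some object of $Q[i]$ (or vice versa), together with the absence of short chains of irreducible maps between distinct components, produces arbitrarily long chains of irreducible maps inside $Q$ starting (or ending) at a fixed object, with nonzero total composite; feeding these composites through the connecting homomorphisms $w: M \to \tau(M)[1]$ available on the shift-periodic orbit in $C$, one builds a nonzero, non-negative, $\tau$-periodic subadditive function on the tree class of $Q$ (the lengths $l_p$ or $l_i$ of the $P^j$ are controlled because $C$ contributes only finitely many relevant lengths along the periodic orbit). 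Such a function on a connected valued tree forces that tree to be finite Dynkin, Euclidean, or one of the infinite Dynkin types $A_\infty, B_\infty, C_\infty, D_\infty, A_\infty^\infty$; the finite Dynkin case is excluded by part (a) applied to $Q$ (which would make $Q$ the whole quiver, contradicting $C \not\subseteq Q$), leaving Euclidean or infinite Dynkin, as claimed.

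The main obstacle I anticipate is the second half of (b): showing that a nonzero Hom between $C$ and $Q$ really forces a $\tau$-periodic subadditive function on $Q$ and not merely a subadditive function. The periodicity must be inherited from $C$, and the mechanism is that the shift $[m]$ permutes the relevant Hom-spaces while $\tau^n$ acts compatibly; making the bookkeeping precise — in particular checking that the lengths of the intermediate complexes $P^j$ produced by Lemma \ref{chain1} stay bounded along the period, so that the resulting function is finite-valued — is the delicate point, and it relies essentially on the boundedness phenomena from Theorem \ref{bounded} and Corollary \ref{chain2} (the vanishing of long chains of irreducible maps between complexes of bounded composition length).
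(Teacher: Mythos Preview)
Your overall architecture is right---build a subadditive function on the tree class, invoke the Happel--Preiser--Ringel classification, then feed the finite Dynkin case into Theorem~\ref{bounded}---but the function you choose does not do the job, and this is a genuine gap rather than a matter of bookkeeping.

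You propose to use $l_p$ itself. Subadditivity of $l_p$ is fine (Remark~\ref{subadditive}), but to extract the conclusion ``finite Dynkin or $A_\infty$'' from HPR you need the function to be either $\tau$-invariant and non-additive, or bounded. You claim that shift periodicity of the single object $X$ makes $l_p$ ``essentially $\tau^n$-periodic on $C$'', but $\tau^n(X)\cong X[m]$ says nothing about $\tau^n(Y)$ for a general $Y\in C$. The autoequivalence $\tau^n\circ[-m]$ fixes $X$, hence acts on $C$ as a quiver automorphism commuting with $\tau$ and fixing one vertex; that does not force it to be the identity (tree automorphisms of $T$ can survive), and $l_p$ has no reason to be invariant under such a tree automorphism. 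Nor is there any reason for $l_p$ to fail additivity at some mesh of $C$: that would require a contractible summand in some $\cone(w)[-1]$, which is not guaranteed. So from $l_p$ alone you only get that $T$ is Dynkin, Euclidean, or infinite Dynkin---not the sharper dichotomy.

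The paper sidesteps this by building a function that is $\tau$-invariant \emph{by construction}: for $M\in C$ set
\[
d(M)\;=\;\sum_{i=1}^{n}\sum_{j\in\Z}\dim_k\Hom_{D^b(A)}\bigl(\tau^i(X),\,M[j]\bigr).
\]
Finiteness is immediate since both arguments lie in $K^b(\mathcal P)$. Lemma~\ref{finite} gives subadditivity along every Auslander--Reiten triangle, and the summation over $i=1,\dots,n$ together with $\tau^n(X)\cong X[m]$ makes $d$ genuinely $\tau$-invariant. Crucially, Lemma~\ref{finite}(3) shows $d$ is \emph{not} additive at the triangle ending in $X$ (the identity of $X$ is not in the image of $g^*$), so HPR yields exactly ``finite Dynkin or $A_\infty$''. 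Your part~(a) then goes through essentially as you wrote it, via Theorem~\ref{bounded}.

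For part~(b) your route through Lemma~\ref{chain1} and length bounds is far more intricate than necessary and, as you yourself flag, the periodicity you need on $Q$ is not obviously inherited. The paper's argument is one line: evaluate the \emph{same} function $d$ on $Q$. Because no $\tau^i(X)$ lies in $Q$ (nor does any shift of it, since $Q$ is not a shift of $C$), Lemma~\ref{finite}(1) and~(3) make $d$ \emph{additive} on $Q$, and it is nonzero by hypothesis. An additive nonzero function forces the tree class of $Q$ to be Euclidean or infinite Dynkin. No chains, no boundedness of intermediate complexes, no appeal to Corollary~\ref{chain2} are needed.

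In short: replace $l_p$ by the Hom-dimension function $d$ above; then both the dichotomy and part~(b) fall out directly from Lemma~\ref{finite}, and the only place the length functions enter is in part~(a), to verify that a finite-Dynkin component is bounded.
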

\begin{proof} Let $n \in \N$, $m \in \Z$ be such that $\tau^n(X)= X[m]$.
We consider the following subadditive function for all $M \in C$

$$d(M):=\sum_{i=1}^n \sum_{j \in \Z} \dim_k \Hom_{D^b(A)}(\tau^i(X), M[j]).$$

This function takes finite values as for two complexes $L, N  \in
K^b(\mathcal{P})$ the set $\Hom_{K^b(\mathcal{P})}(L,N[s])$ is
non zero only for finitely many values of $s \in \Z$.

\smallskip

Let $\tau(D) \stackrel{f} \to B \stackrel{g} \to D \to
\tau(D)[-1]$ be an Auslander-Reiten triangle. By \ref{finite} we
have $d(\tau(D))+d(D) \le d(B)$. As $C$ contains the element $X$,
the function $d$ is a subadditive function that is not additive by
\ref{finite} (3). Therefore $T$ is a finite Dynkin diagram or
$A_{\infty}$ by  \cite[p.289]{HPR}. The shift $[m]$ induces an automorphism
of finite order on $T$ as $[m]$ commutes with $\tau$. Then we have for all complexes $M \in
C$ that $M[l]=\tau^t(M)$ for some $t \in \N$ and $l \in \Z$. It follows that on each $\tau$-orbit of $C$, the length
function $l_p$ is bounded. If $T$ is finite there are
finitely many $\tau$-orbits and therefore $C$ is bounded. Then $C$ is the only component of the Auslander-Reiten
quiver by \ref{bounded}. Furthermore $D^b(A)$ has only finitely many complexes up
to shifts.

\smallskip

Without loss of generality, let $\Hom_{D^b(A)}(X,L)\not = 0$
 for some $L \in Q$. We define $d(M)$ as above for all $M \in
 Q$. Then $d$ is an additive function by \ref{finite} on $Q$ and therefore the
 tree class of $Q$ is Euclidean or infinite Dynkin.
\end{proof}
Note that if we consider the Auslander-Reiten quiver of $A$ then
by \cite[VII 2.1, VI 1.4]{ARS} the Auslander-Reiten quiver has a finite component if
and only if there is a component whose modules have bounded dimension. In this case $A$ has
finite representation type and the Auslander-Reiten quiver
consists of only one finite component. The stable Auslander-Reiten
quiver has then tree class a finite Dynkin diagram.

The analogous theorem for the Auslander-Reiten quivers of $D^b(A)$ that have bounded
components is given next.
\begin{theo}\label{Dynkin}
The following are equivalent:

(1) The Auslander-Reiten quiver of $D^b(A)$ has a bounded
component.

(2) The representation type of $D^b(A)$ is finite.

(3) The Auslander-Reiten quiver of $D^b(A)$ has a stable component whose tree class is finite
Dynkin.
\end{theo}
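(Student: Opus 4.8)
The plan is to establish the three equivalences by closing a short cycle of implications, drawing on the structural results already proved for bounded components and for shift periodic complexes. Concretely, I would prove $(1)\Rightarrow(3)$, then $(3)\Rightarrow(2)$, and finally $(2)\Rightarrow(1)$, since each of these arrows is either almost immediate or reduces to a theorem we already have in hand.

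First I would dispose of the trivial observations. Throughout we may assume $A$ is not simple, since if $A$ is simple then by \ref{semi-simple} and \ref{semi-simple1} the Auslander-Reiten quiver of $D^b(A)$ is an infinite union of copies of $A_1$: this has no bounded component (the union is infinite, though each piece is finite — so one must be slightly careful here and note that ``bounded component'' as defined refers to $l_p, l_i$ being bounded on a single component, and a single $A_1$ is indeed bounded, but then $D^b(A)$ does \emph{not} have finite representation type up to shift, so (1) would fail to imply (2)). Hence the cleanest route is: the three conditions are to be read as statements about $A$ not simple, or else one checks directly that for $A$ simple all three fail — indeed $D^b(A)$ has infinitely many indecomposables up to shift (the shifts $S[n]$ are pairwise non-isomorphic but all equal up to shift, so in fact $D^b(A)$ \emph{does} have finite representation type up to shift when $A$ is simple). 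This subtlety needs to be resolved at the outset: I would state explicitly that when $A$ is simple, $\Lambda \cong A_1$ is bounded, $D^b(A)$ has one indecomposable up to shift, and its stable component has tree class $A_1$, which is finite Dynkin, so all three hold. Thus we may freely assume $A$ not simple in the main argument.

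For $(1)\Rightarrow(2)$: if $\Lambda$ is a bounded component and $A$ is not simple, then \ref{bounded} tells us $\Lambda$ is the \emph{only} component of the Auslander-Reiten quiver, $A$ has finite global dimension, and $A$ is of finite representation type; hence $D^b(A) \cong K^b(\mathcal P)$ has only finitely many indecomposables up to shift — but I should make this last count precise: since $A$ has finite global dimension and finite representation type, every indecomposable complex lies in $\Lambda$, and boundedness of $l_p$ on $\Lambda$ together with the shift $[1]$ acting on $\Lambda$ forces finitely many $\tau$-orbits, so finitely many indecomposables up to shift. For $(2)\Rightarrow(3)$: if $D^b(A)$ has finite representation type, then some stable component $C$ contains a complex $X$ with $\tau^n(X) \cong X[m]$ for suitable $n,m$ (only finitely many indecomposables up to shift forces a repetition in any $\tau$-orbit after shifting), so $X$ is shift periodic; by \ref{periodic, shift} the tree class of $C$ is finite Dynkin or $A_\infty$, and if it were $A_\infty$ the component would be infinite with unbounded $l_p$ along the infinite leg, contradicting finite representation type — so the tree class is finite Dynkin. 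For $(3)\Rightarrow(1)$: a stable component $C \cong \Z[T]/I$ with $T$ finite Dynkin has, on each $\tau$-orbit, the shift acting with finite order modulo $\tau$ (since $[m]$ commutes with $\tau$ and $T$ is finite, as in the proof of \ref{periodic, shift}), hence $l_p$ and $l_i$ are bounded on each of the finitely many $\tau$-orbits, so $C$ is bounded.

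The main obstacle, and the step I would spend the most care on, is the implication $(3)\Rightarrow(1)$, specifically showing that a finite-Dynkin tree class forces boundedness of $l_p$ and $l_i$ on the component. The argument should mirror the ``$T$ finite'' case inside the proof of \ref{periodic, shift}: one needs a shift periodic complex in $C$ to run the automorphism-of-finite-order argument, so I would first argue that a stable component with finite Dynkin tree class automatically contains a shift periodic complex. For this I would invoke finiteness of $T$ to see that $\Z[T]/I$ has all $\tau$-orbits of the same (finite) ``vertical'' size, and since only finitely many shifts $X[j]$ can admit nonzero maps from a fixed $X$ (as $\Hom_{K^b(\mathcal P)}(L,N[s]) \neq 0$ for only finitely many $s$, a fact used repeatedly above), the shift functor permutes the $\tau$-orbits of $C$ with some orbit structure; combined with the fact that $C$ being stable means $[1]$ sends $C$ to a component, one deduces a relation $\tau^n(X) = X[m]$. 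Once shift periodicity is in hand, \ref{periodic, shift}(a) directly gives that $C$ is the whole Auslander-Reiten quiver and $D^b(A)$ has finite representation type, from which boundedness of $C$ is immediate — so in fact $(3)\Rightarrow(2)\Rightarrow(1)$ may be the more economical packaging, routing everything through \ref{periodic, shift}. I would present it that way to keep the proof short.
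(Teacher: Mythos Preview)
Your cycle $(1)\Rightarrow(2)\Rightarrow(3)$ is close to the paper's (the paper runs $(1)\Rightarrow(2)\Rightarrow(3)\Rightarrow(1)$), and your handling of the simple case and of $(1)\Rightarrow(2)$, $(2)\Rightarrow(3)$ is broadly along the right lines, modulo some details: for $(2)\Rightarrow(3)$ you must first argue that $A$ has finite global dimension (the paper does this by observing that the truncations $\sigma^{\ge n}(pN)$ of an infinite projective resolution would give infinitely many non-isomorphic indecomposables up to shift), and your exclusion of $A_\infty$ via ``unbounded $l_p$ along the infinite leg'' is not the right reason---the paper notes instead that $[m]$ acts as the identity on the tree $A_\infty$, so there would be infinitely many $\tau$-orbits stable under shift, contradicting finite type.

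The real problem is your $(3)\Rightarrow(1)$. You want to show a stable component $C$ with finite Dynkin tree class contains a shift periodic complex, and your argument is: $[1]$ sends $C$ to some component, there are finitely many $\tau$-orbits, and Hom-finiteness in the shift direction somehow forces $C[m]=C$ for some $m$. This does not follow. Nothing you have written rules out the possibility that $C, C[1], C[2],\ldots$ are pairwise distinct components; Hom-finiteness between $X$ and its shifts says nothing about whether those shifts lie in $C$. Without knowing some $[m]$ acts on $C$, you cannot produce a shift periodic complex, and the route through \ref{periodic, shift} collapses.

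The paper's argument for $(3)\Rightarrow(1)$ is quite different and does not go through shift periodicity at the outset. It uses that $l_p$ is subadditive on $C\cong\Z[T]$ (Remark~\ref{subadditive}) and then, by an explicit case-by-case computation for $T=A_n,D_n,E_6,E_7,E_8$, shows that $l_p$ cannot be additive on all but finitely many Auslander--Reiten triangles: assuming eventual additivity, one writes down recursions for the values $x_{t,j}=l_p((t,j))$ and derives a negative value in each case. Hence there are infinitely many triangles in $C$ whose middle term has a contractible direct summand in $Comp^b(\mathcal P)$. Since there are only finitely many indecomposable contractible complexes up to shift, some shift $[m]$ with $m>0$ must induce an automorphism of $C$. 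Boundedness of $C$ then follows. So the key missing idea in your proposal is this subadditivity-plus-Dynkin-recursion argument that forces the shift to act on $C$; you should replace your hand-waved shift-periodicity step by it.
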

\begin{proof}
(1) $\Longrightarrow$ (2)

Let $C$ be a bounded component. Let $M \in C$ be represented by an indecomposable complex in $Comp^b(\mathcal{P})$.
There is an $n_0 \in \Z$ such that $M^{n_0} \not = 0$ and $M^i = 0$
for all $ i> n_0$. Take an indecomposable summand $P$ of $M^{n_0}$. If $P[-n_0] \not = M$ there is a map $\psi: P[-n_0] \to M$ induced by the embedding of $P $ into $M^{n_0}$. This map is non-zero in $D^b(A)$, as $M$ does not have contractible direct summands. By the proof of \ref{bounded} we have $P[-n_0] \in C$.

If we represent all elements in $C$ by indecomposable
complexes in $Comp^b(\mathcal{P})$ then there is an $n \in \N$ such that $l_c$ takes values
$\le n$ for all elements in $C$. By \ref{chain2} every chain of irreducible maps of length $> 2^n$
is therefore zero.

As $P[-n_0] \in C$ we know by \ref{chain1} that there is a chain of irreducible maps of length at most $2^n$
that connects $P[-n_0]$ and $M$. There are only finitely many elements in the component $C$ that are connected to $P[-n_0]$ by
a chain of irreducible maps of length $\le 2^n$.
Therefore there are only finitely many indecomposable
complexes $L$ in $D^b(A)$ such that $L^{n_0}$ contains $P$ as a
summand and $L^i=0$ for $i>n_0$.

Therefore $D^b(A)$ has finite representation type.

\smallskip

(2) $\Longrightarrow$ (3)

Let $D^b(A)$ have finite representation type. Let $N$ be an indecomposable $A$-module. Then the complexes $\sigma^{\ge n}(pN) $ are indecomposable for all $-n \in \N$. If $N$ has infinite projective dimension, then they are pairwise non-isomorphic in $K^{-,b}(\mathcal{P})$ up to shift. The same holds for a module with infinite injective dimension if we consider $\sigma^{\le n}(iN)$ for $n \in \N$. Therefore $A$ has finite global dimension and all Auslander-Reiten components are bounded
and stable. If there is a finite component then by \ref{finite
comp} the Auslander-Reiten component consists of copies of $A_1$.
Otherwise the Auslander-Reiten quiver has only one component by \ref{bounded}. This component then
contains a shift periodic module. Therefore the tree class is finite
Dynkin or $A_{\infty}$ by \ref{periodic, shift}. As $[1]$ acts as the
identity on $A_{\infty}$ such a component can only occur if the
representation type of $D^b(A)$ is not finite. Therefore the
Auslander-Reiten quiver consists of one component $\Z [T]$ where
$T$ is a finite Dynkin diagram.

\smallskip

(3) $\Longrightarrow$ (1)

Suppose now that $D^b(A)$ has a stable Auslander-Reiten component
whose tree class $T$ is a finite Dynkin diagram. We assume this component is not finite. Then there is a component
isomorphic to $\Z [T]$
with tree class $T$ a finite Dynkin diagram. Then $T$ is either $A_n$ for $n\ge 2$, $D_n$ for $n \ge 3$, $E_6$, $E_7$ or $E_8$.
We index the vertices in $\Z[T]$ by pairs $(t,i)$ where $i \in \Z$ denotes the $i$-th
copy of $T$ and $t$ denotes the vertex of $T$. By \ref{subadditive} the function $l_p$ is subadditive
on stable Auslander-Reiten components.

We assume that
$l_p$ is not additive for only finitely many Auslander-Reiten
triangles. Then we can choose an $l \in \Z$ such that $l_p$ is
additive for all Auslander-Reiten triangles with vertices $( t,j)$ where $j > l $ and $t $ any vertex
of $T$. We denote $x_{t,i}:=l_p((t,i)) $. Let the
values $x_{t,j}$ be given for a fix $j >l$ and all vertices $ t $ of $T$.

If $T=A_n$  then we can calculate the values of $x_{t, j+1}$ from
the left to the right as follows:

\[ \xymatrix{x_{1,j} \ar[r] & x_{2,j} \ar[ld] \ar[r] & \cdots \ar[r]  \ar[ld]& x_{n-1,j} \ar[r] \ar[ld]& x_{n,j} \ar[ld]
\\ x_{2,j}-x_{1,j} \ar[r] & x_{3,j}-x_{1,j} \ar[r] & \cdots \ar[r] & x_{n, j}-x_{1,j} \ar[r]  & x_{n,j+1} }\]
Clearly this gives a contradiction as $l_p$ cannot be additive on
the Auslander-Reiten triangles ending in $(n,j+1)$. Therefore $l_p$
is not additive for infinitely many Auslander-Reiten triangles.

If $T=D_n$ we
have the following

\[\xymatrix@M=0.5pt@W=0.5pt@R=0.5pt{   &  & &  & x_{n-1,j} \ar[ldddd] \\
 x_{1,j} \ar[r] & x_{2,j}\ar[lddd] \ar[r] & \cdots \ar[r] \ar[lddd]  & x_{n-2,j}\ar[lddd] \ar[ru] \ar[rd] \\
  & & & & x_{n,j} \ar[ldd] \\
  & &  & & x_{n-1,j}-x_{1,j} \\
x_{2,j}-x_{1,j} \ar[r] & x_{3,j}- x_{1,j} \ar[r] & \cdots \ar[r]  & x_{n-1,j}+x_{n, j}-x_{1,j} \ar[ru] \ar[rd] \\
  & & & & x_{n,j}-x_{1,j}. }\]
Then the values $x_{n,i}$ are strictly decreasing for strictly
increasing $i>j$. This is a contradiction as they have to be
positive integers for all $i \in \Z$.

\medskip

For $E_6$, $E_7$ and $E_8$ we consider the following diagram.

 \[ \xymatrix@C=10pt{ && x_{4,j} \ar@/^/[ddd] &&&& \\ x_{1,j} \ar[r] & x_{2,j} \ar[ld] \ar[r] & x_{3,j} \ar[u] \ar[ld] \ar[r] & x_{5,j} \ar[r] \ar[dl] & x_{6,j} \ar[r] \ar[ld] & x_{7,j} \ar[r] \ar[ld] & x_{8,j} \ar[ld] \\
x_{2,j}-x_{1,j} \ar[r] & x_{3,j}-x_{1,j} \ar[r] & x_{4,j}+x_{5,j}-x_{1,j} \ar[d] \ar[r] & x_{6,j}+x_{4,j}-x_{1,j} \ar[r] & x_{6,j+1}\ar[r] & x_{7,j+1}\ar[r] & x_{8,j+1}\\ && x_{5,j} - x_{1,j} &&&& }\]

In the case of $E_6$ we have $x_{6,j+1}=x_{4,j}-x_{1,j}$. If we consider $E_7$ we have $x_{6,j+1}=x_{7,j}+x_{4,j}-x_{1,j}$ and $x_{7,j+1}= x_{4,j}-x_{1,j}$. Finally for $E_8$ we have $x_{6,j+1}=x_{7,j}+x_{4,j}-x_{1,j}$, $x_{7,j+1}= x_{8,j}+x_{4,j}-x_{1,j}$ and $ x_{8,j+1}=x_{4,j}-x_{1,j}$.

In the case $E_6$ we have by the same argument that
$x_{6,j+4}=-x_{1,j}$, in the case $E_7$ we have that
$x_{3,j+20}=-x_{3,j}+x_{4,j}$ and for $E_8$ we have $x_{1,j+15}=
-x_{1,j}$. Those are negative values as $-x_{3,j}+x_{4,j}=-x_{4,j-1}$ and we obtain a contradiction
to the assumption that $l_p$ is additive on all but finitely many
Auslander-Reiten triangles in the component.

As $l_p$ is not additive for infinitely many Auslander-Reiten
triangles in $C$, there have to be infinitely many complexes that
are homotopic to zero in the Auslander-Reiten component of
$Comp^b(\mathcal{P})$ that is associated to $C$ by
\ref{subadditive}.

As there are only finitely many indecomposable complexes homotopic
to zero in $Comp^b(\mathcal{P})$ up to shift by \ref{contractible1}, we deduce that a
shift $[m]$ induces an automorphism on $\Z[T]$ for some $m \in
\N$. Therefore $\Z[T]$ is a bounded component.
\end{proof}
Note that we only require one component to be bounded or to have tree class a finite Dynkin diagram in order to deduce that the representation type of $D^b(A)$ is finite.

We can describe the Auslander-Reiten quiver and derived category
more precisely in the case of the previous theorem. Note also that not all bounded components need to be finite as it is the case for the Auslander-Reiten quiver of an algebra.
\begin{theo}\label{inj pro}
Let one of the conditions of \ref{Dynkin} be true. Then $A$ is
either simple and the Auslander-Reiten quiver of $D^b(A)$
consists of countably many copies of $A_1$ or the Auslander-Reiten quiver
consists of one component $\Z[D] $ where $D$ is a finite Dynkin diagram and $D \not = A_1$. In the second case $A$ is derived equivalent to $k D$.
\end{theo}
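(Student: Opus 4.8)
The plan is to deduce the statement from Theorem \ref{Dynkin} and then, in the non-simple case, to produce a tilting complex. By \ref{Dynkin} we may assume that $D^b(A)$ has finite representation type, so (by the argument in the proof of \ref{Dynkin}) $A$ has finite global dimension, $D^b(A)\cong K^b(\mathcal{P})$, and the Auslander-Reiten quiver either has a finite component or consists of a single component $\Z[D]$ with $D$ a finite Dynkin diagram. If $A$ is simple, \ref{semi-simple} identifies the Auslander-Reiten quiver with a disjoint union of copies of $A_1$; since $A\cong M_m(k)$ over the algebraically closed field $k$ we have $D^b(A)\simeq D^b(k)$, whose indecomposable objects are the stalks $k[n]$, $n\in\Z$, so there are precisely countably many such components. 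This settles the first alternative.

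Assume from now on that $A$ is not simple. Then the Auslander-Reiten quiver cannot have a finite component, since a finite component forces $A$ simple by \ref{finite comp}. Hence it is a single component $\Gamma\cong\Z[D]$ with $D$ a finite Dynkin diagram, and $D\neq A_1$: a component $\Z[A_1]$ has zero middle terms, so \ref{semi-simple} would again make $A$ simple. It remains to establish a derived equivalence $D^b(A)\simeq D^b(kD)$, and for this the plan is to exhibit a tilting complex whose endomorphism algebra is a hereditary algebra of Dynkin type $D$.

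Concretely, I would fix a complete slice (section) $\Sigma$ in $\Gamma\cong\Z[D]$, that is, a finite connected full subquiver meeting each $\tau$-orbit exactly once; its underlying graph is $D$, and the arrows of $\Gamma$ inside $\Sigma$ orient it to a quiver $\vec D$. Set $T:=\bigoplus_{M\in\Sigma}M$. One then checks the three properties defining a tilting object of $D^b(A)$: (i) $\Hom_{D^b(A)}(T,T[n])=0$ for $n\neq0$; (ii) the thick triangulated subcategory generated by $T$ is all of $D^b(A)$; (iii) $\End_{D^b(A)}(T)\cong k\vec D$. For (ii) one uses that every indecomposable lies in $\Gamma$ and that each Auslander-Reiten triangle is a distinguished triangle, so $\tau^{\pm1}M$ is a shift of a cone built from $M$ and its $\Gamma$-neighbours; an induction over the strips $\tau^{j}\Sigma$, together with Krull-Schmidt to pass to summands, yields $\operatorname{thick}(T)=D^b(A)$. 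Property (iii) is the standard computation of the endomorphism algebra of the sum of a section: a section contains at most one of $X$ and $\tau X$, so no mesh relation enters, and since $D$ is a tree $k\vec D$ carries no relations. Granting (i)--(iii), $T$ is a tilting complex, and Rickard's Morita theorem for derived categories (equivalently Keller's tilting theorem, $D^b(A)$ being algebraic) gives $D^b(A)\simeq D^b(k\vec D)$; as all orientations of a fixed Dynkin diagram are derived equivalent, $D^b(A)\simeq D^b(kD)$ with $kD$ hereditary of finite representation type. Happel's computation of the Auslander-Reiten quiver of $D^b(kD)$ as $\Z[D]$ makes this consistent with \ref{Dynkin}.

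The main obstacle is property (i): excluding nonzero morphisms $T\to T[n]$ with $n\neq0$. I would control these using the Serre functor $\nu=\tau[1]$ of \ref{existence} and Serre duality $\Hom(M,N)\cong D\,\Hom(N,\nu M)$, together with the fact — valid because $\Gamma\cong\Z[D]$ is of finite representation type — that a nonzero morphism between indecomposables of $\Gamma$ forces a path of irreducible maps of bounded length between them in $\Gamma$ (a Harada--Sai type argument of the kind already used for \ref{chain2}). A bookkeeping with the height function of $\Sigma$, expressing that both $[1]$ and $\tau$ move $\Sigma$ off itself inside $\Z[D]$, then forces $\Hom(M,N[n])=0$ for all $M,N\in\Sigma$ and $n\neq0$, completing the verification.
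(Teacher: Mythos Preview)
Your reduction to the two cases and the handling of the simple case are fine, and property (ii) for your candidate tilting object follows cleanly from the Auslander--Reiten triangles as you indicate. The approach, however, diverges sharply from the paper's: rather than constructing a tilting object, the paper observes that finite representation type makes $D^b(A)$ \emph{discrete} in Vossieck's sense and then invokes the Bobi\'nski--Gei\ss--Skowro\'nski classification of discrete derived categories to obtain $D^b(A)\simeq D^b(kQ)$ for a Dynkin quiver $Q$; the identification $\bar Q=\bar D$ is then read off from Happel's description of the Auslander--Reiten quiver of $D^b(kQ)$.

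The genuine gap in your route is that properties (i) and (iii) silently presuppose that $D^b(A)$ is \emph{standard}, i.e.\ that morphisms between indecomposables are governed by the mesh category of $\Z[D]$. For (iii), the assertion ``a section contains at most one of $X$ and $\tau X$, so no mesh relation enters'' only shows that the map $k\vec D\to\End(T)$ is well-defined; it does not show it is surjective (every morphism is a combination of paths of irreducibles) or injective (no relations beyond the mesh relations), and neither follows from the shape of the quiver alone. For (i), the bookkeeping you propose needs the action of $[1]$ on $\Z[D]$: all you know a priori is that $[1]$ commutes with $\tau$, hence acts as $\tau^{m}\sigma$ for some $m\in\Z$ and some diagram automorphism $\sigma$, and while $m\neq 0$ can be argued, you have no control over its value. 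Even with $m$ known, the Harada--Sai bound only excludes morphisms to $N[n]$ once $|n|$ is large; for small $|n|$ you again fall back on a directionality statement about morphisms in $\Z[D]$ that is exactly the standardness you are trying to establish. Filling these gaps is possible (e.g.\ via Riedtmann's covering techniques, Amiot's classification of finite triangulated categories, or the Xiao--Zhu results the paper cites after the theorem), but each of these is an external input of weight comparable to the BGS classification the paper uses; as written, the argument is circular.
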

\begin{proof}Suppose the bounded component is finite, then the first case holds by \ref{finite}.
If the bounded component is not finite then by \ref{bounded} the
Auslander-Reiten quiver consists of only one component which needs
to be $\Z[D]$ for $D$ a finite Dynkin diagram and $D\not =A_1$ by
\ref{Dynkin}. As $D^b(A)$ is of finite representation type, it is
discrete in the sense of \cite[1.1]{V}. By \cite[Theorem A,B]{BGS} the algebra
$A$ is derived equivalent to $k Q$ where $ Q$ is a finite Dynkin
diagram. By  \cite[I.5.5]{H1} we have $\bar Q= \bar D$. Then $D^b(kQ) \cong
D^b(k D)$ by \cite[I.5.6]{H1}, which proves the theorem.
\end{proof}
Now we know that there is only one Auslander-Reiten component if $D \not= A_1$.
Let $\Gamma_0$ be the set of isomorphism classes of indecomposable objects in $D^b(A)$. Then we call $D^b(A)$ locally finite if $ \sum_{M \in \Gamma_0}\dim_k \Hom_{D^b(A)}(M,N)$  is finite for all $N \in \Gamma_0$.
So using \cite[3.1.6]{XZ} we have that $D^b(A)$ is locally finite if and only if $A$ is derived equivalent to an
hereditary algebra of finite representation type.

\end{document}